\documentclass[11pt]{amsart}

\usepackage{amssymb}
\usepackage{bm}
\usepackage[centertags]{amsmath}
\usepackage{amsfonts}
\usepackage{amsthm}
\usepackage{mathrsfs}
\usepackage[usenames]{xcolor}
\usepackage[normalem]{ulem}
\usepackage{comment}

\theoremstyle{definition}
\newtheorem{thm}{Theorem}[section]
\newtheorem{defn}[thm]{Definition}
\newtheorem{lem}[thm]{Lemma}
\newtheorem{prop}[thm]{Proposition}
\newtheorem{cor}[thm]{Corollary}
\newtheorem{rem}[thm]{Remark}

\newtheorem{question}{Question}

\newtheorem*{defn*}{Definition}

\newtheorem*{thm*}{Theorem}

\newtheorem*{cor*}{Corollary}

\newtheorem*{prp*}{Proposition}

\newtheorem{problem}{Problem}

\newtheorem*{problem*}{Problem}

\newcommand{\N}{\mathbb{N}}

\newcommand{\iii}[1]{{\left\vert\kern-0.25ex\left\vert\kern-0.25ex\left\vert #1 
    \right\vert\kern-0.25ex\right\vert\kern-0.25ex\right\vert}}

\newcommand\restr[2]{{
  \left.\kern-\nulldelimiterspace 
  #1 
  \vphantom{\big|} 
  \right|_{#2} 
  }}

\long\def\symbolfootnote[#1]#2{\begingroup%
\def\thefootnote{\fnsymbol{footnote}}\footnote[#1]{#2}\endgroup}

\allowdisplaybreaks

\begin{document}

\title[Continuous factorization]{Continuous factorization of the identity matrix}

\author[Y. Dai]{Yuying Dai}
\address{Department of Mathematics, University of Illinois at Urbana-Champaign, Urbana, IL 61801, U.S.A.}
\email{yuyingd2@illinois.edu}

\author[A. Hore]{Ankush Hore}
\address{Department of Mathematics, University of Illinois at Urbana-Champaign, Urbana, IL 61801, U.S.A.}
\email{ahore2@illinois.edu}

\author[S. Jiao]{Siqi Jiao}
\address{Department of Mathematics, University of Illinois at Urbana-Champaign, Urbana, IL 61801, U.S.A.}
\email{sjiao2@illinois.edu}

\author[T. Lan]{Tianxu Lan}
\address{Department of Mathematics, University of Illinois at Urbana-Champaign, Urbana, IL 61801, U.S.A.}
\curraddr{Columbia University, 116th St. \& Broadway, New York, NY 10027} 
\email{tl2971@columbia.edu}

\author[P. Motakis]{Pavlos Motakis}
\address{Department of Mathematics, University of Illinois at Urbana-Champaign, Urbana, IL 61801, U.S.A.}
\email{pmotakis@illinois.edu}

\thanks{{\em 2010 Mathematics Subject Classification:} Primary 15A23, 46B07.}
\thanks{This work was done in the research project ``Continuous factorization of the identity matrix'' at the Illinois Geometry Lab in Spring 2019. The first, third, and fourth authors participated as undergraduate scholars, the second author served as graduate student team leaders, and the fifth author as faculty mentor. The project was supported by the National Science Foundation under Grant Number DMS-1449269. The fifth author was  supported by the National Science Foundation under Grant Number DMS-1912897.}

\begin{abstract}
We investigate conditions under which the identity matrix $I_n$  can be continuously factorized through a continuous $N\times N$ matrix function $A$ with domain in $\mathbb{R}$. We study the relationship of the dimension $N$, the diagonal entries of $A$, and the norm of $A$ to the dimension $n$ and the norms of the matrices that witness the factorization of $I_n$ through $A$.
\end{abstract}

\maketitle

\section{Introduction}
The problem from which this paper draws motivation concerns the relation between the magnitude of the diagonal entries $a_{ii}$ of an $N\times N$ matrix $A$, the norm of $A$, and the dimension $n$ of a vector space that $A$ preserves in a satisfying manner, as precisely described below.

\begin{problem}
\label{problem1}
Given $N\in\mathbb{N}$ and $\delta>0$ find the largest $n\in\mathbb{N}$ with the following property: for every $N\times N$ matrix $A = (a_{ij})$ with $\|A\|\leq 1$ the diagonal entries of which satisfy $|a_{ii}|\geq \delta$ for $1\leq i\leq N$, there exist $n\times N$ and $N\times n$ matrices $L$ and $R$ so that $LAR = I_n$ and $\|L\|\|R\| \leq 2/\delta$.
\end{problem}

The  upper bound imposed to the quantity $\|L\|\|R\|$ must necessarily be at least $1/\delta$ (see Remark \ref{why one over delta}). We use elementary combinatorics and linear algebra to study Problem \ref{problem1}. Subsequently, we allow the entries of $A$ to vary continuously and study the corresponding problem in the solution of which it is additionally required that the preserved vector spaces vary continuously as well. In this article we are mainly concerned with the following.

\begin{problem}
\label{problem2}
Given $N\in\mathbb{N}$ and $\delta>0$ find the largest $n\in\mathbb{N}$ with the following property: for every $N\times N$ continuous matrix function $A:\mathbb{R}\to M_N(\mathbb{R})$ with $\|A(t)\|\leq 1$ and $|a_{ii}(t)|\geq \delta$ for $1\leq i\leq N$ and all $t\in \mathbb{R}$, there exist continuous matrix functions $L:\mathbb{R}\to M_{n\times N}(\mathbb{R})$ and $R:\mathbb{R}\to M_{n\times N}(\mathbb{R})$ so that $L(t)A(t)R(t) = I_n$ and $\|L(t)\|\|R(t)\| \leq 2/\delta$ for all $t\in\mathbb{R}$.
\end{problem}
We provide lower bounds for $n$ in Problem \ref{problem1} and Problem \ref{problem2}. In particular, we show that in both cases the order of magnitude of $n$ is at least $\delta^{4/3}N^{1/3}$ (see Theorem \ref{theorem stationary} and Theorem \ref{main theorem}). In the continuous case, this is achieved by using the proof of our estimate for Problem \ref{problem1} point-wise. In this fashion, we obtain an open cover of $\mathbb{R}$ consisting of intervals on each of which there are continuous matrix functions $L$ and $R$ factoring $I_n$ through $A$. In the final step, we use these local solutions as building blocks to construct a continuous solution defined on the entire real line.

Although our approach is entirely Euclidean and finite dimensional, this topic has origins that fit neither description. On a (generally infinite dimensional) Banach space $X$ with a coordinate system $(e_i)_i$ (e.g., a Schauder basis) every bounded linear operator $A:X\to X$ can be identified with an infinite matrix $(a_{ij})$. If this matrix has large diagonal, in the sense that $\inf_i|a_{ii}| >0$, one may ask whether there exist bounded linear operators $L,R:X\to X$ so that $LAR = I_X$. In 1979 A. D. Andrew first showed that the answer is yes if $X = L_p$, $1<p<\infty$ and the coordinate system under consideration is the Haar system (see \cite{Andrew-1979}). Since then, a number of papers have contributed to the study of this general problem in a variety of infinite dimensional Banach spaces $X$ (see, e.g., \cite{Laustsen-Lechner-Mueller-2018}, \cite{Lechner-2018-Israel}, \cite{Lechner-2018-Studia}, \cite{Lechner-2017-arxiv}, \cite{Lechner-2018-arxiv2}, and \cite{Lechner-Mueller-Motakis-Schlumprecht-2018}). The source of the finite dimensional version of this problem can be traced to J. Bourgain and L. Tzafriri. Their paper \cite{Bourgain-Tzafriri-1987}, among other results, provides an estimate for $n$ in Problem \ref{problem1}, which is of the order $\delta^2 N$ (see Remark \ref{Bourain-Tzafriri remark}). Within this context, other finite dimensional non-Euclidean spaces have been studied by R. Lechner in \cite{Lechner-2019-PAMS} and \cite{Lechner-2018-arxiv1}. To the best of our knowledge, the continuous matrix function case has not been considered before.

The paper is divided into two sections. In Section \ref{section constant} we provide necessary estimates for the norm of a matrix as well as estimates for the size of families of columns of a given matrix $A$ with  the property of being almost orthogonal to one another. Subsequently, we proceed to give an estimate of $n$ for Problem \ref{problem1} by defining matrices $L$ and $R$. In Section \ref{section continuous} we explicitly use the definition of $L$ and $R$ of the constant case to find for each $t$ in the domain of the matrix function $A$ $L(t)$ and $R(t)$ as desired. We then extend these solutions continuously on a small interval around $t$. From there on, we synthesize these local solutions by taking appropriate convex combinations of them and we observe that the desired conclusion is satisfied.

In the sequel, for an $N\times N$ matrix $A = (a_{i,j}) = [a_1\cdots a_N]$ we will consider the quantity $\theta = \min_i\|a_i\|$, instead of $\delta = \min_i|a_{i,i}|$. As $\delta \leq \theta$ our results are slightly more general than already advertised. We have included proofs of some well known facts and estimates in an effort to make this paper as self contained as possible. Although all results are stated and proved for matrices with real entries, obvious modifications make them valid for matrices with complex entries as well.

\section{The constant case}
\label{section constant}

We use elementary counting tools and tools from linear algebra to factorize the identity matrix through a square matrix with large diagonal. The section is organized into three subsections. The first one includes simple estimates of the norm of a matrix, the second one presents combinatorial arguments that are used to find collections of columns of a matrix that are almost orthogonal to one another, and in the third one we present the construction of the factors $L$ and $R$ and prove their desired properties.

Let us recall some necessary notions used in this section. We identify $\mathbb{R}^n$ with the collection of $n\times 1$ matrices. Thus when we write $x = (x_1,\ldots,x_n)$ in reality we mean $x = [x_1\cdots x_n]^\top$. For $1\leq i\leq n$ we denote by $e_i$ the vector in $\mathbb{R}^n$ that has $1$ in the $i$'th entry and $0$ in all others. Recall that for a vector $x =  (x_1,\ldots,x_n)$ in $\mathbb{R}^n$ we define its Euclidean norm of $x$ to be the quantity $\|x\| = (\sum_{i=1}^n|x_i|^2)^{1/2}$. For two vectors $x =  (x_1,\ldots,x_n)$, $y =  (y_1,\ldots,y_n)$ in $\mathbb{R}^n$ their inner product is the quantity $\langle x,y\rangle = \sum_{i=1}^nx_iy_i$. The Cauchy-Schwarz inequality states that for such $x$ and $y$ we have $|\langle x,y\rangle| \leq \|x\|\|y\|$ (see, e.g., \cite[Theorem 4.6]{Meckes-2018}). For an $m\times n$ matrix $A = (a_{i,j})$ when we write $A = [a_1\cdots a_n]$ we mean that for  each $1\leq j\leq n$ the entries of the $j$'th column of $A$ form $a_j$, i.e., the vector $(a_{1,j},\ldots,a_{m,j})$ in $\mathbb{R}^m$ (a similar notation can be used for writing $A$ with respect to its rows $\alpha_1^\top,\ldots,\alpha_m^\top$). Then, for $n\in\mathbb{N}$ the $n\times n$ identity matrix $I_n$ is the matrix $[e_1\cdots e_n]$. Recall, if $A$ is an $m\times n$ matrix with columns $a_1,\ldots,a_n$ and $B$ is a $k\times m$ matrix with rows $\beta_1^\top,\ldots,\beta_k^\top$, then the $i,j$'th entry of the product matrix $BA$ is $\langle\beta_i,a_j\rangle$. For an $m\times n $ matrix $A$ we define its norm to be the quantity $\|A\| = \sup\{\|Ax\|: x\in\mathbb{R}^n$, $\|x\|\leq 1\}$. It is easy to see that for $A$ and $x$ of appropriate dimensions we have $\|Ax\|\leq \|A\|\|x\|$. Similarly, by the association property of matrix multiplication (see, e.g., \cite[Theorem 2.10]{Meckes-2018}), for matrices $A$ and $B$ of appropriate dimensions we have $\|AB\| \leq \|A\|\|B\|$. Finally, recall that a function $f:\mathbb{R}\to\mathbb{R}$ is called convex if for every $0\leq\lambda \leq1$ and $s,t\in\mathbb{R}$ we have $f(\lambda s+ (1-\lambda)t) \leq \lambda f(s) + (1-\lambda)\lambda f(t)$. A direct computation can be used to show that the square function $f(t) = t^2$ is a convex function.

\subsection{Upper bounds of matrix norms}
\label{Upper bounds of matrix norms}
The estimates in this subsection are elementary and well known, yet we include the simple proofs for completeness.

\begin{prop}
\label{norm estimate zero}
Let $m$, $n\in\N$ and $A = [a_1\cdots a_n]$ be an $m\times n$ matrix.  Set
\[\Lambda = \max_{1\leq i\leq n}\|a_i\|\;\text{and}\; \lambda = \max_{1\leq i\neq j\leq n}|\langle a_i,a_j\rangle|.\]
Then $\|A\| \leq (\Lambda^2 + (n-1)\lambda)^{1/2}$.
\end{prop}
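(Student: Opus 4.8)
The plan is to estimate $\|Ax\|^2 = \langle Ax, Ax\rangle$ for an arbitrary unit vector $x = (x_1,\dots,x_n)\in\mathbb{R}^n$ by expanding in terms of the columns. Since $Ax = \sum_{i=1}^n x_i a_i$, bilinearity of the inner product gives
\[
\|Ax\|^2 = \Big\langle \sum_{i=1}^n x_i a_i,\ \sum_{j=1}^n x_j a_j\Big\rangle = \sum_{i=1}^n x_i^2 \|a_i\|^2 + \sum_{i\neq j} x_i x_j \langle a_i, a_j\rangle.
\]
For the diagonal terms I would bound $\|a_i\|^2 \leq \Lambda^2$ and use $\sum_i x_i^2 = \|x\|^2 \leq 1$, giving a contribution of at most $\Lambda^2$. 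For the off-diagonal terms I would bound $|\langle a_i,a_j\rangle|\leq\lambda$ and estimate $\sum_{i\neq j}|x_i||x_j|$.

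The one slightly delicate point — and the main (though still routine) obstacle — is bounding $\sum_{i\neq j}|x_i||x_j|$ by $n-1$ rather than by the crude $\big(\sum_i|x_i|\big)^2 \le n$ that the Cauchy–Schwarz inequality would give directly. I would handle this by writing, for each fixed $i$, $\sum_{j\neq i}|x_i||x_j| = |x_i|\sum_{j\neq i}|x_j| \leq |x_i|\cdot\sqrt{n-1}\,\big(\sum_{j\neq i}x_j^2\big)^{1/2}$ via Cauchy–Schwarz, and then apply Cauchy–Schwarz once more in the variable $i$, together with $\sum_i x_i^2\le 1$ and $\sum_{j\ne i}x_j^2\le 1$. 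Alternatively, one can use the convexity of $t\mapsto t^2$ (noted in the preliminaries): since $\big(\sum_{j\ne i}\frac{|x_j|}{n-1}\big)^2 \le \sum_{j\ne i}\frac{x_j^2}{n-1}$, summing over $i$ yields $\sum_{i\neq j}|x_i||x_j| \le (n-1)\sum_i x_i^2 \cdot$ (an averaging bookkeeping step) $\le n-1$. Either route establishes $\sum_{i\neq j}|x_i||x_j|\le n-1$.

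Combining the two estimates gives $\|Ax\|^2 \leq \Lambda^2 + (n-1)\lambda$ for every unit vector $x$, and taking the supremum over such $x$ and a square root yields $\|A\|\leq (\Lambda^2 + (n-1)\lambda)^{1/2}$, as claimed. No step requires anything beyond the inner product identities, the Cauchy–Schwarz inequality, and convexity of the square function, all recalled in the preliminaries.
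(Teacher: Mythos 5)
Your proof is correct and follows essentially the same route as the paper: expand $\|Ax\|^2$ via bilinearity, bound the diagonal contribution by $\Lambda^2$, and bound the off-diagonal contribution by $\lambda(n-1)$. The only (cosmetic) difference is in showing $\sum_{i\neq j}|x_i||x_j|\le n-1$: the paper simply uses the identity $\sum_{i\neq j}|x_i||x_j| = \big(\sum_i|x_i|\big)^2 - \sum_i x_i^2 \le n\|x\|^2 - \|x\|^2 = n-1$ in one line, which is slightly more direct than your double Cauchy--Schwarz (and cleaner than your second, somewhat hand-waved ``averaging bookkeeping'' route), but your first route does yield the correct bound.
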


\begin{proof}
Let $x = (x_1,\ldots,x_m)$ be a vector of norm one. By convexity of the square function we have
\[\Big(\sum_{i=1}^n\frac{1}{n}|x_i|\Big)^2 \leq \frac{1}{n}\sum_{i=1}^n|x_i|^2,\]
or $\sum_{i=1}^n|x_i| \leq n^{1/2}\|x\|$. Then,
\[\begin{split}\|Ax\|^2 &= \langle Ax,Ax\rangle = \sum_{i=1}^mx_i^2\|a_i\|^2 + \sum_{i\neq j}x_ix_j \langle a_i,a_j\rangle\\
&\leq \Lambda^2\|x\|^2 + \lambda\sum_{i\neq j}|x_ix_j| = \Lambda^2 + \lambda(\sum_{i=1}^n|x_i|\sum_{i=j}^n|x_j| - \sum_{i=1}^n|x_i|^2)\\
&\leq \Lambda^2 + \lambda(n - 1).
\end{split}\]
\end{proof}

\begin{cor}
\label{norm estimate one}
Let $n\in\mathbb{N}$ and $A = (a_{i,j})$ be an $m\times n$ matrix. Set $d = \max_{i,j}|a_{i,j}|$. Then $\|A\| \leq d m^{1/2}n^{1/2}$.
\end{cor}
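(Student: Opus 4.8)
The plan is to deduce the estimate directly from Proposition \ref{norm estimate zero} by bounding the two quantities $\Lambda$ and $\lambda$ occurring there in terms of $d$ and the dimensions $m,n$. Write $A = [a_1\cdots a_n]$ with columns $a_j = (a_{1,j},\ldots,a_{m,j}) \in \mathbb{R}^m$. Since every entry satisfies $|a_{i,j}| \leq d$, we have $\|a_j\|^2 = \sum_{i=1}^m |a_{i,j}|^2 \leq md^2$, so $\Lambda = \max_{1\leq j\leq n}\|a_j\| \leq d\,m^{1/2}$. For the off-diagonal inner products, the Cauchy--Schwarz inequality (or simply the pointwise bound $|a_{i,j}||a_{i,k}|\leq d^2$) gives $|\langle a_j,a_k\rangle| \leq \sum_{i=1}^m |a_{i,j}||a_{i,k}| \leq md^2$ whenever $j\neq k$, hence $\lambda \leq md^2$ as well.

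Plugging these into Proposition \ref{norm estimate zero} yields
\[
\|A\| \leq (\Lambda^2 + (n-1)\lambda)^{1/2} \leq \big(md^2 + (n-1)md^2\big)^{1/2} = d\,(mn)^{1/2} = d\,m^{1/2}n^{1/2},
\]
which is the claimed bound.

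Alternatively one could avoid invoking the proposition and argue from first principles: for a unit vector $x = (x_1,\ldots,x_n)$, the $i$'th coordinate of $Ax$ equals $\sum_{j=1}^n a_{i,j}x_j$, whose square is at most $\big(\sum_{j=1}^n a_{i,j}^2\big)\big(\sum_{j=1}^n x_j^2\big) \leq nd^2$ by Cauchy--Schwarz, and summing over the $m$ coordinates of $Ax$ gives $\|Ax\|^2 \leq mnd^2$. I would present the first route since it reuses the work already done. There is no genuine obstacle here; the only point requiring minor care is to keep track of which dimension contributes which power — the columns live in $\mathbb{R}^m$, so their norms carry the factor $m^{1/2}$, while the factor $n$ enters through the number of columns via the ``$n-1$'' term of Proposition \ref{norm estimate zero}.
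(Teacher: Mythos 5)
Your first route is exactly the paper's proof: bound each column norm by $d\,m^{1/2}$ and each off-diagonal inner product by $md^2$, then apply Proposition \ref{norm estimate zero}. The alternative Cauchy--Schwarz computation you sketch is also fine, but the route you chose to present matches the paper's argument.
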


\begin{proof}
Every column of $A$ has norm at most $dm^{1/2}$ and any two different columns have inner product with absolute value at most $md^2$. A direct application of Proposition \ref{norm estimate zero} yields the desired bound.
\end{proof}

\begin{cor}
\label{norm estimate two}
Let $N,n\in\N$ and $A = [a_1\cdots a_n]$ be an $N\times n$ matrix. Set
\[\lambda = \max_{1\leq i\neq j\leq n}|\langle a_i,a_j\rangle|\;\text{and}\;\Delta = \max_{1\leq i \leq n}|\|a_i\|^2 - 1|.\]
Then $\|A^T A - I_n\| \leq n\max\{\lambda,\Delta\}$.
\end{cor}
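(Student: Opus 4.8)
The plan is to reduce Corollary \ref{norm estimate two} to Proposition \ref{norm estimate zero} by examining the entries of the matrix $M := A^TA - I_n$ directly. First I would observe that the $(i,j)$ entry of $A^TA$ is $\langle a_i, a_j\rangle$, since the $i$th row of $A^T$ is $a_i^T$; consequently, subtracting the identity, the off-diagonal entries of $M$ are $\langle a_i,a_j\rangle$ for $i\neq j$, each of absolute value at most $\lambda$, while the diagonal entries are $\|a_i\|^2 - 1$, each of absolute value at most $\Delta$. Thus every entry of $M$ has absolute value at most $\max\{\lambda,\Delta\}$.

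Next I would pass from the entrywise bound to a bound on the columns of $M$, so as to apply Proposition \ref{norm estimate zero}. Each column of $M$ has at most $n$ entries, each bounded by $\max\{\lambda,\Delta\}$ in absolute value, so its Euclidean norm is at most $n^{1/2}\max\{\lambda,\Delta\}$; hence in the notation of Proposition \ref{norm estimate zero} applied to the matrix $M$ (which is $n\times n$), we may take $\Lambda = n^{1/2}\max\{\lambda,\Delta\}$. For the cross terms, any two distinct columns of $M$ have inner product bounded in absolute value by $n\max\{\lambda,\Delta\}^2$ via Cauchy--Schwarz (or simply by summing $n$ products of bounded quantities), so we may take $\lambda$ (in the sense of Proposition \ref{norm estimate zero}, applied to $M$) equal to $n\max\{\lambda,\Delta\}^2$.

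Plugging these into Proposition \ref{norm estimate zero} gives
\[
\|M\| \leq \big(\Lambda^2 + (n-1)\lambda\big)^{1/2} \leq \big(n\max\{\lambda,\Delta\}^2 + (n-1)n\max\{\lambda,\Delta\}^2\big)^{1/2} = \big(n^2\max\{\lambda,\Delta\}^2\big)^{1/2} = n\max\{\lambda,\Delta\},
\]
which is exactly the claimed bound. Note the convenient cancellation: the two contributions combine to $n^2\max\{\lambda,\Delta\}^2$ under the square root, so no slack is lost.

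I do not anticipate a serious obstacle here; the only thing to be careful about is the potential notational clash between the symbol $\lambda$ in the statement of Corollary \ref{norm estimate two} and the symbol $\lambda$ playing a different role inside Proposition \ref{norm estimate zero}. In writing the proof I would either rename the auxiliary quantities or simply phrase the application of Proposition \ref{norm estimate zero} in words (``the maximum column norm of $M$ is at most $n^{1/2}\max\{\lambda,\Delta\}$ and the maximum off-diagonal inner product is at most $n\max\{\lambda,\Delta\}^2$'') to avoid confusion. An alternative, even more elementary route would be to bound $\|M\|$ by $\|M\|_{HS}$ (the Hilbert--Schmidt norm), but that yields only $\|M\| \le n\max\{\lambda,\Delta\}$ when all $n^2$ entries are at the extreme, which matches, so either approach works; I would present the one via Proposition \ref{norm estimate zero} for consistency with the surrounding development.
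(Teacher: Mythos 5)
Your proof is correct and is essentially the paper's argument: the paper identifies the entries of $A^TA - I_n$ exactly as you do and then invokes Corollary \ref{norm estimate one}, which is itself just Proposition \ref{norm estimate zero} applied with the entrywise bound $d=\max\{\lambda,\Delta\}$. You have simply inlined that Corollary, reproducing the same column-norm and inner-product estimates, so the two proofs coincide in substance.
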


\begin{proof}
The $i,j$ entry of the matrix $A^T A - I$ is $\langle a_i,a_j\rangle$ if $i\neq j$ and $\|a_i\|^2 - 1$ if $i=j$. The result follows from applying Corollary \ref{norm estimate one}.
\end{proof}

\subsection{Counting arguments}
\label{Counting arguments}
In this section we estimate the maximal number of columns of a norm one matrix that can have large inner product with a fixed column. This estimate is then used to find collections of columns which are almost orthogonal to one another.
\begin{prop}
\label{counting estimate zero}
Let $A = [a_1\cdots a_M]$ be an $N\times M$ matrix and let $\varepsilon >0$. Then for every $1\leq i\leq M$ the set
\[B_{i}^{\varepsilon} = \Big\{1\leq j\leq M: |\langle a_i,a_j\rangle|\geq \varepsilon\Big\}\]
has at most $\|A\|^4/\varepsilon^2$ elements.
\end{prop}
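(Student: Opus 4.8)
The plan is to bound the size of $B_i^\varepsilon$ by testing $A$ (or rather a suitable vector built from the offending columns) against the norm inequality $\|A\|\geq \|Ax\|/\|x\|$, used twice. Write $B = B_i^\varepsilon$ and $k = |B|$. For $j\in B$ we have $|\langle a_i,a_j\rangle|\geq\varepsilon$; let $\sigma_j\in\{-1,+1\}$ be the sign of $\langle a_i,a_j\rangle$, so that $\sigma_j\langle a_i,a_j\rangle = |\langle a_i,a_j\rangle|\geq\varepsilon$. Consider the vector $x = \sum_{j\in B}\sigma_j e_j\in\mathbb{R}^M$, which has $\|x\| = k^{1/2}$. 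Then the $i$'th coordinate of $Ax$ is $\langle a_i, Ax\rangle = \sum_{j\in B}\sigma_j\langle a_i,a_j\rangle = \sum_{j\in B}|\langle a_i,a_j\rangle| \geq k\varepsilon$, since $a_i^\top$ is (up to transpose) the $i$'th row of $A$ and the $i$'th entry of $Ax$ is $\langle \alpha_i, x\rangle$ where $\alpha_i$ is the $i$'th row; but note $\alpha_i$ need not equal $a_i$. So a cleaner route is the following.

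Instead, first estimate $\|a_i\|$ against offending columns directly: for each $j\in B$, Cauchy--Schwarz gives $\varepsilon\leq|\langle a_i,a_j\rangle|\leq\|a_i\|\|a_j\|\leq\|a_i\|\cdot\|A\|$, so in particular $\|a_i\|\geq\varepsilon/\|A\|$. That is not yet enough. The key step is to apply $A^\top$ to the vector $y = \sum_{j\in B}\sigma_j a_j$. On one hand $\|y\|\leq\|A\|\|x\| = \|A\|k^{1/2}$, where $x=\sum_{j\in B}\sigma_j e_j$. On the other hand, $\|A^\top y\|\geq$ the absolute value of its $i$'th coordinate, which is $|\langle a_i, y\rangle| = \big|\sum_{j\in B}\sigma_j\langle a_i,a_j\rangle\big| = \sum_{j\in B}|\langle a_i,a_j\rangle|\geq k\varepsilon$. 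Since $\|A^\top\| = \|A\|$, we get $k\varepsilon\leq\|A^\top y\|\leq\|A\|\|y\|\leq\|A\|^2 k^{1/2}$, hence $k^{1/2}\leq\|A\|^2/\varepsilon$, i.e. $k\leq\|A\|^4/\varepsilon^2$, which is exactly the claimed bound.

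The only genuinely delicate point is bookkeeping with transposes: one must make sure the coordinate of $A^\top y$ that is bounded below really is $\langle a_i,y\rangle$ (this is just the identity ``the $i,j$ entry of $A^\top A$ is $\langle a_i,a_j\rangle$'', already recalled in the preliminaries, applied with the second factor replaced by the single vector $y = \sum_{j\in B}\sigma_j a_j$). Everything else is Cauchy--Schwarz, the submultiplicativity $\|A^\top y\|\leq\|A^\top\|\|y\|$, and the fact $\|A^\top\|=\|A\|$; the latter is standard, and if one wishes to avoid invoking it, one can instead apply the norm inequality to $A$ twice by also writing $\|y\| = \|Ax\|$ with $x$ as above and chaining, giving the same constant. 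I would present it in the first clean form: define $\sigma_j$, set $x=\sum_{j\in B}\sigma_j e_j$, compute $k\varepsilon\leq\|A^\top A x\|\leq\|A\|^2\|x\| = \|A\|^2 k^{1/2}$, and conclude $k\leq\|A\|^4/\varepsilon^2$.
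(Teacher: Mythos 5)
Your argument is correct, and in its final ``clean form'' it is a genuine variant of the paper's proof rather than a reproduction of it. The paper tests the operator norm by applying $A^\top$ to the single column $a_i$: since $\|A^\top a_i\|^2=\sum_k|\langle a_k,a_i\rangle|^2\geq\varepsilon^2\#B_i^\varepsilon$ and $\|A^\top a_i\|\leq\|A\|\,\|a_i\|\leq\|A\|^2$, it compares the full $\ell^2$ sum of inner products against $\|A\|^4$. You instead build the $\pm1$ sign vector $x=\sum_{j\in B}\sigma_j e_j$ supported on $B_i^\varepsilon$, apply $A^\top A$ to it, and use only a single coordinate: $\|A^\top Ax\|\geq|(A^\top Ax)_i|=\sum_{j\in B}|\langle a_i,a_j\rangle|\geq k\varepsilon$, while $\|A^\top Ax\|\leq\|A\|^2 k^{1/2}$, giving $k\leq\|A\|^4/\varepsilon^2$. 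Both proofs hinge on $\|A^\top\|=\|A\|$ and on the identity that the entries of $A^\top A$ are the pairwise inner products of columns, and both yield the identical constant; the paper's version is a hair shorter because it needs no sign bookkeeping and no final division by $k^{1/2}$, while yours has a more combinatorial, ``test-against-a-sign-pattern'' flavor. One small presentational note: the first paragraph of your write-up is a false start (you notice mid-stream that the $i$'th entry of $Ax$ involves the $i$'th row of $A$, not the $i$'th column), and you should simply delete it and present only the version with $A^\top A x$.
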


\begin{proof}
If $a_i$ is the zero vector then the conclusion is obvious and we may therefore assume that it is not. Recall that for any matrix $A$ we have $\|A\| = \|A^\top\|$. Indeed, if $x$ is a norm-one vector with $\|A\| = \|Ax\|$ then $\|A\|^2 = \langle Ax,Ax\rangle = \langle x,A^\top Ax\rangle \leq \|x\|\|A^\top Ax\|\leq \|A^\top\|\|A\|\|x\|^2 =  \|A^\top\|\|A\|$ and hence $\|A\| \leq \|A^\top \|$. By symmetry of the argument we also have $\|A^\top\|\leq \|A\|$. We calculate
\[\begin{split}
\|A\|^2 = \|A^T\|^2 \geq \frac{1}{\|a_i\|^2}\|A^Ta_i\|^2 = \frac{1}{\|Ae_i\|^2}\sum_{k=1}^M|\langle a_k,a_i\rangle|^2 \geq \frac{1}{\|A\|^2}\varepsilon^2\#B_i^\varepsilon.
\end{split}\]
\end{proof}

\begin{cor}
\label{combinatorial estimate zero}
Let $n\in\N$ with $n\geq 2$, $0<\varepsilon<1/(n-1)^{1/2}$, and $N\geq n/\varepsilon^2$. Then for any $L\in\N$ and $L\times N$ matrix $A = [a_1\cdots a_N]$ with $\|A\| \leq 1$ there exists $F\subset \{1,\ldots,N\}$ with $\#F = n$ so that for $i\neq j\in F$ we have $|\langle a_i,a_j\rangle| < \varepsilon$.
\end{cor}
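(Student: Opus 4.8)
The plan is to build the set $F$ greedily, one index at a time, while keeping track of a shrinking pool of "still-admissible" columns. Concretely, I would construct a nested sequence of sets $N = G_0 \supset G_1 \supset \cdots \supset G_{n}$ together with indices $i_1 \in G_0, i_2 \in G_1, \ldots, i_n \in G_{n-1}$ so that each $i_k$ is chosen from the current pool $G_{k-1}$, and then $G_k$ is obtained from $G_{k-1}$ by deleting every index $j$ with $|\langle a_{i_k}, a_j\rangle| \geq \varepsilon$; formally $G_k = G_{k-1} \setminus B_{i_k}^\varepsilon$, using the notation of Proposition~\ref{counting estimate zero}. At the end, $F = \{i_1,\ldots,i_n\}$: by construction, whenever $k < \ell$ the index $i_\ell$ lies in $G_{k}$, which has already had $B_{i_k}^\varepsilon$ removed, so $|\langle a_{i_k}, a_{i_\ell}\rangle| < \varepsilon$, giving exactly the required almost-orthogonality.

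The only thing to verify is that the greedy process does not run out of indices before it has picked $n$ of them, i.e.\ that $G_{k-1} \neq \emptyset$ for each $1 \leq k \leq n$. Here I would use Proposition~\ref{counting estimate zero}: since $\|A\| \leq 1$, each set $B_{i_k}^\varepsilon$ has at most $\|A\|^4/\varepsilon^2 \leq 1/\varepsilon^2$ elements, so passing from $G_{k-1}$ to $G_k$ removes at most $1/\varepsilon^2$ indices. Hence after $k$ steps we have discarded at most $k/\varepsilon^2$ indices, so $\#G_k \geq N - k/\varepsilon^2$. Since we only need the pool to be nonempty up through $G_{n-1}$ in order to select $i_n$, it suffices that $N - (n-1)/\varepsilon^2 \geq 1$, which certainly holds given the hypothesis $N \geq n/\varepsilon^2$ (in fact $N \geq n/\varepsilon^2 > (n-1)/\varepsilon^2 + 1$ once one notes $1/\varepsilon^2 > n - 1 \geq 1$ from $\varepsilon < 1/(n-1)^{1/2}$, so the count is comfortably positive).

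There is no real obstacle here; the argument is a clean pigeonhole/greedy selection once Proposition~\ref{counting estimate zero} is in hand, and the role of the hypothesis $\varepsilon < 1/(n-1)^{1/2}$ is mainly to make the arithmetic $N \geq n/\varepsilon^2$ slightly generous and — more importantly — it is the condition that will later make these $n$ almost-orthogonal columns genuinely useful (via Corollary~\ref{norm estimate two}), rather than something needed for this extraction step itself. The one point requiring a sentence of care is bookkeeping: making sure each chosen index $i_k$ actually belongs to the pool it is selected from and that the deletions are cumulative, so that the pairwise bound holds for \emph{all} pairs in $F$, not just consecutive ones; the nested structure $G_0 \supset G_1 \supset \cdots$ handles this automatically.
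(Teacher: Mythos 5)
Your argument is the same greedy-plus-counting strategy the paper uses, built on Proposition~\ref{counting estimate zero}, and the overall structure is correct. There is, however, a small but real gap in the step you describe as ``handled automatically'' by the nesting: the update $G_k = G_{k-1}\setminus B_{i_k}^\varepsilon$ need not delete $i_k$ itself. The index $i_k$ belongs to $B_{i_k}^\varepsilon$ only when $\|a_{i_k}\|^2 \geq \varepsilon$, and the corollary places no lower bound on the column norms (a column could even be zero), so $i_k$ may survive into $G_k$ and be re-selected at a later stage, leaving $\#F < n$. The paper's proof avoids this by taking $i_k$ from $\{1,\ldots,N\}\setminus\bigl(\{i_1,\ldots,i_{k-1}\}\cup\bigcup_{m<k} B_{i_m}^\varepsilon\bigr)$, i.e.\ deleting the already-chosen indices as well. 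With that correction the count becomes $\#G_{k-1} \geq N - (k-1)\bigl(1 + 1/\varepsilon^2\bigr) \geq 1/\varepsilon^2 - (n-1)$, which is positive precisely because $\varepsilon < 1/(n-1)^{1/2}$. This also shows that your closing remark, that the hypothesis $\varepsilon < 1/(n-1)^{1/2}$ is not really needed for the extraction step, is not quite right: once the distinctness bookkeeping is done properly, that inequality is exactly what makes the pigeonhole close.
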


\begin{proof}
Set $i_1 = 1$ and inductively pick $i_2,\ldots,i_n$ so that for $2\leq k\leq n$ $i_k\in\{1,\ldots,N\}\setminus(\{i_1,\ldots,i_{k-1}\}\cup(\cup_{m=1}^{k-1} B^\varepsilon_{i_m}))$. This is possible because, by Proposition \ref{counting estimate zero}, in every inductive step $2\leq k\leq n$ the set $\{1,\ldots,N\}\setminus(\{i_1,\ldots,i_{k-1}\}\cup(\cup_{m=1}^{k-1} B^\varepsilon_{i_m}))$ has at least
\[N - \Big(k-1 + \frac{(k-1)}{\varepsilon^2}\Big) \geq \frac{n}{\varepsilon^2} - (n-1)\Big(1+\frac{1}{\varepsilon^2}\Big) = \frac{1}{\varepsilon^2} - (n-1)>0\]
elements.
\end{proof}

The following estimate will be used in Section \ref{section continuous}. We include it here for consistency.
\begin{cor}
\label{combinatorial estimate one}
Let $n\in\N$ with $n\geq 2$, $0<\varepsilon<1/(n-1)^{1/2}$, and $N\geq 5n/\varepsilon^2$. Let $A = [a_1\cdots a_N]$ be an $N\times N$ matrix with $\|A\|\leq 1$. Then for every $F_1$, $F_2\subset\{1,\ldots,N\}$ with $\#F_1 = \#F_2 = n$ there exists $F_3\subset\{1,\ldots,N\}$ with $\#F_3 = n$ so that the following hold:
\begin{itemize}
\item[(i)] $F_3$ is disjoint from $F_1\cup F_2$,
\item[(ii)] for any $i\neq j\in F_3$ we have $|\langle a_i,a_j\rangle|<\varepsilon$, and
\item[(iii)] for any $i\in F_3$, $j\in F_1\cup F_2$ we have $|\langle a_i,a_j\rangle|<\varepsilon$.
\end{itemize}
\end{cor}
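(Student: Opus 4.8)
The plan is to reduce Corollary~\ref{combinatorial estimate one} to Corollary~\ref{combinatorial estimate zero} by ruling out, for each index $j \in F_1 \cup F_2$, the columns that have large inner product with $a_j$, and then applying the previous counting argument to the remaining columns. More precisely, first I would set $G = F_1 \cup F_2$, so $\#G \le 2n$, and consider the ``forbidden set''
\[
\mathcal{B} = G \cup \bigcup_{j \in G} B_j^\varepsilon,
\]
where $B_j^\varepsilon = \{1 \le i \le N : |\langle a_i, a_j\rangle| \ge \varepsilon\}$ as in Proposition~\ref{counting estimate zero}. By that proposition, each $B_j^\varepsilon$ has at most $\|A\|^4/\varepsilon^2 \le 1/\varepsilon^2$ elements, so
\[
\#\mathcal{B} \le 2n + 2n \cdot \frac{1}{\varepsilon^2} = 2n\Big(1 + \frac{1}{\varepsilon^2}\Big).
\]
Set $S = \{1,\ldots,N\}\setminus\mathcal{B}$. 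Then, using $N \ge 5n/\varepsilon^2$ and a crude estimate ($1/\varepsilon^2 > n-1 \ge 1$, so $1 + 1/\varepsilon^2 \le 2/\varepsilon^2$), I would check that
\[
\#S \ge \frac{5n}{\varepsilon^2} - 2n\Big(1+\frac{1}{\varepsilon^2}\Big) \ge \frac{5n}{\varepsilon^2} - \frac{4n}{\varepsilon^2} = \frac{n}{\varepsilon^2},
\]
so $S$ still contains at least $n/\varepsilon^2$ indices.

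Next I would apply the inductive selection from the proof of Corollary~\ref{combinatorial estimate zero}, but restricted to $S$: pick $i_1 \in S$ arbitrarily, and having chosen $i_1,\ldots,i_{k-1} \in S$, pick
\[
i_k \in S \setminus\Big(\{i_1,\ldots,i_{k-1}\} \cup \bigcup_{m=1}^{k-1} B_{i_m}^\varepsilon\Big).
\]
The same counting as before shows this set is nonempty at each step $2 \le k \le n$, because $\#S \ge n/\varepsilon^2$ and we remove at most $(k-1)(1 + 1/\varepsilon^2) \le (n-1)(1+1/\varepsilon^2)$ further indices, leaving at least $1/\varepsilon^2 - (n-1) > 0$ (this is exactly where the hypothesis $\varepsilon < 1/(n-1)^{1/2}$ is used). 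Let $F_3 = \{i_1,\ldots,i_n\}$. Then (i) holds since $F_3 \subset S$ is disjoint from $G = F_1 \cup F_2$; (ii) holds because for $i_k, i_\ell \in F_3$ with $k < \ell$ we excluded $i_\ell$ from $B_{i_k}^\varepsilon$; and (iii) holds because $F_3 \subset S$ avoids every $B_j^\varepsilon$ for $j \in G$, i.e. $|\langle a_i, a_j\rangle| < \varepsilon$ for all $i \in F_3$ and $j \in F_1 \cup F_2$.

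There is no real obstacle here; the statement is essentially a bookkeeping refinement of Corollary~\ref{combinatorial estimate zero}. The only point requiring a little care is the arithmetic verifying $\#S \ge n/\varepsilon^2$: one must track the constant $5$ in the hypothesis $N \ge 5n/\varepsilon^2$ and confirm that it is large enough to absorb the $2n$ indices of $F_1 \cup F_2$ together with the $2n/\varepsilon^2$ indices in the union of the sets $B_j^\varepsilon$, and then still leave the $n/\varepsilon^2$ room needed to run the selection argument. Since $\varepsilon$ is small (so $1 \le 1/\varepsilon^2$), the bound $2n(1 + 1/\varepsilon^2) \le 4n/\varepsilon^2$ suffices, and $5n/\varepsilon^2 - 4n/\varepsilon^2 = n/\varepsilon^2$ closes the estimate exactly. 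Thus the proof is a direct combination of Proposition~\ref{counting estimate zero} and the argument of Corollary~\ref{combinatorial estimate zero}, with the forbidden set enlarged to also exclude neighbors of $F_1 \cup F_2$.
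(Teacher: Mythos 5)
Your proof is correct and follows the same route as the paper's: remove $F_1\cup F_2$ together with $\bigcup_{j\in F_1\cup F_2}B_j^\varepsilon$, verify via Proposition~\ref{counting estimate zero} that at least $n/\varepsilon^2$ indices remain, and then run the inductive selection of Corollary~\ref{combinatorial estimate zero} inside that remaining set. The only cosmetic differences are notation and the arithmetic path to $\#S\ge n/\varepsilon^2$ (you absorb $2n$ into $2n/\varepsilon^2$ up front, the paper carries $3n/\varepsilon^2-2n$ and compares at the end), which are equivalent.
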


\begin{proof}
Define $G = \{1,\ldots,N\}\setminus((F_1\cup F_2)\cup(\cup_{i\in F_1\cup F_2}B_i^\varepsilon))$. Then $\#G \geq 5n/\varepsilon^2 - 2n - 2n/\varepsilon^2 = 3n/\varepsilon^2 - 2n\geq n/\varepsilon^2$. We now follow the exact same argument as in the proof of Corollary  \ref{combinatorial estimate zero} to find $F_3\subset G$ with $\#F_3 = n$ so that for all $i\neq j\in F_3$ we have $|\langle a_i,a_j\rangle| <\varepsilon$. The fact that $F_3\subset G$ also yields (i) and (iii).
\end{proof}

\subsection{The matrices $L$ and $R$}
We next explicitly define the matrices $L$ and $R$ with the property $LAR = I_n$. For the definition of $L$ and $R$ we use the results from Subsection \ref{Counting arguments}. We then use the estimates provided in Subsection \ref{Upper bounds of matrix norms} to estimate the quantity $\|L\|\|R\|$.

We now introduce the matrices $L_{(A,F)}$, $R_{(A,F)}$ that are defined using $A$ and a subset $F$ of the columns of $A$. This dependence on $F$ will also be important in the next section.

\begin{defn}
\label{left and right}
Let $n\leq N\in\N$, $A = [a_1\cdots a_N]$ be an $N\times N$ matrix, and let $F = \{i_1<\cdots<i_n\}$ be a subset of $\{1,\ldots,N\}$ with $\|a_i\| > 0$ for $i\in F$. For $k=1,\ldots,n$ set $r_{(A,F)}^k = e_{i_k}/\|a_{i_k}\|$, i.e., the $N$-dimensional vector  that has $1/\|a_{i_k}\|$ in the $i_k$'th entry and zero everywhere else. Define the $N\times n$ and $n\times N$ matrices
\[R_{(A,F)} = \Big[r_{(A,F)}^1\cdots r_{(A,F)}^n\Big]\quad\text{and}\quad L_{(A,F)} = (AR_{(A,F)})^T.\]
\end{defn}

\begin{rem}
\label{AR form}
Observe that for $1\leq k\leq n$ we have $Ar_{(A,F)}^k = a_{i_{k}}/\|a_{i_{k}}\|$ and thus
\[AR_{(A,F)} = \Big[\frac{a_{i_1}}{\|a_{i_1}\|}\cdots \frac{a_{i_n}}{\|a_{i_n}\|}\Big].\]
\end{rem}

Here, we give estimates for the norms of the matrices $L_{(A,F)}$, $R_{(A,F)}$, and $L_{(A,F)}AR_{(A,F)} - I_n$.

\begin{prop}
\label{F matrices estimates}
Let $n\leq N\in\N$, let $A$ be an $N\times N$ matrix, and let $F = \{i_1<\cdots<i_n\}$ be a subset of $\{1,\ldots,N\}$ with $\|a_i\| > 0$ for $i\in F$. Set
\[\theta = \min_{i\in F}\|a_i\|\;\text{and}\;\varepsilon = \max_{i\neq j\in F}|\langle a_i,a_j\rangle|.\]
Then we have
\[
\begin{split}
&\|R_{(A,F)}\| \leq \theta^{-1},\; \|L_{(A,F)}\| \leq 1 + \frac{(n-1)^{1/2}\varepsilon^{1/2}}{\theta},\;\text{and}\\
&\|L_{(A,F)}AR_{(A,F)} - I_n\|\leq \frac{n\varepsilon}{\theta^2}.
\end{split}
\]
\end{prop}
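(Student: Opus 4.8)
The plan is to estimate the three quantities directly from the definitions, using the norm bounds from Subsection \ref{Upper bounds of matrix norms}.

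First I would handle $\|R_{(A,F)}\|$. The matrix $R_{(A,F)} = [r^1_{(A,F)}\cdots r^n_{(A,F)}]$ has columns $e_{i_k}/\|a_{i_k}\|$, which are scalar multiples of distinct standard basis vectors. Hence any two distinct columns are orthogonal, so with the notation of Proposition \ref{norm estimate zero} we get $\lambda = 0$ and $\Lambda = \max_k \|a_{i_k}\|^{-1} = \theta^{-1}$. Applying Proposition \ref{norm estimate zero} gives $\|R_{(A,F)}\| \leq (\Lambda^2 + 0)^{1/2} = \theta^{-1}$. (Alternatively one checks directly that $R_{(A,F)}^T R_{(A,F)}$ is diagonal with entries $\|a_{i_k}\|^{-2}$.)

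Next I would estimate $\|L_{(A,F)}AR_{(A,F)} - I_n\|$, since the bound on $\|L_{(A,F)}\|$ will follow from it by a triangle-inequality argument. By Remark \ref{AR form}, $AR_{(A,F)}$ is the matrix whose columns are the normalized vectors $a_{i_k}/\|a_{i_k}\|$, and $L_{(A,F)} = (AR_{(A,F)})^T$, so $L_{(A,F)}AR_{(A,F)} = (AR_{(A,F)})^T(AR_{(A,F)})$. Its $k,\ell$ entry is $\langle a_{i_k}, a_{i_\ell}\rangle / (\|a_{i_k}\|\,\|a_{i_\ell}\|)$; the diagonal entries equal $1$ exactly, and the off-diagonal entries are bounded in absolute value by $\varepsilon/\theta^2$ (using $\|a_{i_k}\|,\|a_{i_\ell}\|\geq\theta$ and $|\langle a_{i_k},a_{i_\ell}\rangle|\leq\varepsilon$). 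Thus $L_{(A,F)}AR_{(A,F)} - I_n$ has zero diagonal and off-diagonal entries of modulus at most $\varepsilon/\theta^2$, so Corollary \ref{norm estimate two} (with the parameters $\lambda \leq \varepsilon/\theta^2$ and $\Delta = 0$, applied to the $N\times n$ matrix $AR_{(A,F)}$ whose columns have norm exactly $1$) yields $\|L_{(A,F)}AR_{(A,F)} - I_n\| \leq n\varepsilon/\theta^2$.

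Finally, for $\|L_{(A,F)}\|$, I would write $L_{(A,F)} = L_{(A,F)}AR_{(A,F)}\,R_{(A,F)}^{+}$ is not quite clean, so instead I would argue via the Gram matrix: since $\|L_{(A,F)}\| = \|(AR_{(A,F)})^T\| = \|AR_{(A,F)}\|$ and $\|AR_{(A,F)}\|^2 = \|(AR_{(A,F)})^T AR_{(A,F)}\| = \|L_{(A,F)}AR_{(A,F)}\|$, we get by the triangle inequality $\|L_{(A,F)}\|^2 \leq \|L_{(A,F)}AR_{(A,F)} - I_n\| + \|I_n\| \leq n\varepsilon/\theta^2 + 1$. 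Hence $\|L_{(A,F)}\| \leq (1 + n\varepsilon/\theta^2)^{1/2} \leq 1 + (n\varepsilon/\theta^2)^{1/2} \cdot$ — here I should be slightly careful: the claimed bound is $1 + (n-1)^{1/2}\varepsilon^{1/2}/\theta$, which is weaker in the $(n-1)$ versus $n$ comparison only if one uses $\sqrt{1+x}\leq 1+\sqrt{x}$; to land exactly on $(n-1)^{1/2}$ I would instead apply Proposition \ref{norm estimate zero} directly to $AR_{(A,F)}$, whose columns have norm $\Lambda = 1$ and pairwise inner products bounded by $\lambda \leq \varepsilon/\theta^2$, giving $\|L_{(A,F)}\| = \|AR_{(A,F)}\| \leq (1 + (n-1)\varepsilon/\theta^2)^{1/2} \leq 1 + (n-1)^{1/2}\varepsilon^{1/2}/\theta$, where the last step uses $\sqrt{1+x}\leq 1+\sqrt{x}$ for $x\geq 0$. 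The only mild obstacle is bookkeeping: making sure the inner products of the columns of $AR_{(A,F)}$ are bounded by $\varepsilon/\theta^2$ rather than $\varepsilon$ (the normalization divides by two factors each at least $\theta$), and correctly invoking $\sqrt{1+x}\le 1+\sqrt x$ at the end.
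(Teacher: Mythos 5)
Your proof is correct and follows essentially the same route as the paper: bound $\|R_{(A,F)}\|$ by applying Proposition \ref{norm estimate zero} to its orthogonal columns, bound $\|L_{(A,F)}\|=\|AR_{(A,F)}\|$ by applying Proposition \ref{norm estimate zero} to the unit-norm columns $a_{i_k}/\|a_{i_k}\|$ with pairwise inner products $\leq \varepsilon/\theta^2$ and then invoking $\sqrt{1+x}\leq 1+\sqrt{x}$, and bound the last quantity by Corollary \ref{norm estimate two} applied to $AR_{(A,F)}$. The brief detour in your third paragraph (the triangle-inequality attempt giving $n$ instead of $n-1$) is superfluous, since you already self-correct to the argument the paper uses.
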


\begin{proof}
The first two estimates follow from Proposition \ref{norm estimate zero} whereas the third is a consequence of Corollary \ref{norm estimate two}. For the first one observe that the columns of $R_{(A,F)}$ all have norm at most $1/\theta$ and they are all orthogonal to one another. For the second one, if we denote $b_k = a_{i_k}/\|a_{i_k}\|$ for $1\leq k\leq n$ then by Remark \ref{AR form}
\[AR_{(A,F)} = [b_1\cdots b_n].\]
That is, all columns of $AR_{(A,F)}$ have norm one and for $1\leq k\neq m\leq n$ we have $|\langle b_k,b_m\rangle| \leq \varepsilon/\theta^2$. Recall that for $x\geq 0$ we have $(1+x)^{1/2}\leq 1+ x^{1/2}$. Thus,
\[\|L_{(A,F)}\| = \|L_{(A,F)}^T\| \leq (1 + (n-1)\varepsilon/\theta^2)^{1/2} \leq 1 + (n-1)^{1/2}\varepsilon^{1/2}/\theta.\]
The final estimate follows from Corollary \ref{norm estimate two} directly applied to the matrix $AR_{(A,F)}= [b_1\cdots b_n]$.
\end{proof}

The following is the main result of this section.

\begin{thm}
\label{theorem stationary}
Let $N\in\N$ and let $A =[a_1\cdots a_N]$ be an $N\times N$ matrix with $\|A\| \leq 1$. If $\theta = \min_{1\leq i\leq N}\|a_i\| > 0$ then for every  $1\leq n\leq \frac{1}{5}\theta^{4/3}N^{1/3}$ there exist $n\times N$ and $N\times n$ matrices $L$ and $R$ respectively so that $LAR = I_n$ and $\|L\|\|R\| \leq 2/\theta$.
\end{thm}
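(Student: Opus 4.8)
The plan is to combine the counting argument of Corollary~\ref{combinatorial estimate zero} with the norm estimates of Proposition~\ref{F matrices estimates}, choosing the parameter $\varepsilon$ so that the error term $n\varepsilon/\theta^2$ is small enough to correct by a Neumann-series argument while the bounds on $\|L\|$ and $\|R\|$ stay below $2/\theta$. First I would observe that it suffices to prove the statement for a fixed $n$ in the indicated range, and that we may assume $n\geq 2$ (the case $n=1$ is trivial, taking $L = a_1^T/\|a_1\|^2$ and $R = e_1$). Then I would set $\varepsilon = \theta^2/(3n)$ (a value I will justify below) and check that the hypotheses $0<\varepsilon<1/(n-1)^{1/2}$ and $N\geq n/\varepsilon^2$ of Corollary~\ref{combinatorial estimate zero} are met: the first because $\theta\le 1$ and $n$ is not too small, the second precisely because $n\le \tfrac15\theta^{4/3}N^{1/3}$ forces $n/\varepsilon^2 = 9n^3/\theta^4 \le N$. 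This application produces a set $F\subset\{1,\dots,N\}$ with $\#F = n$ and $|\langle a_i,a_j\rangle| < \varepsilon$ for all $i\neq j\in F$.

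Next I would feed $A$ and this $F$ into Definition~\ref{left and right}, obtaining $R_{(A,F)}$ and $L_{(A,F)}$. Proposition~\ref{F matrices estimates} gives $\|R_{(A,F)}\|\le \theta^{-1}$, $\|L_{(A,F)}\|\le 1 + (n-1)^{1/2}\varepsilon^{1/2}/\theta$, and $\|L_{(A,F)}AR_{(A,F)} - I_n\| \le n\varepsilon/\theta^2$. With the choice $\varepsilon = \theta^2/(3n)$ the error becomes $n\varepsilon/\theta^2 = 1/3 < 1$, so the matrix $E := L_{(A,F)}AR_{(A,F)}$ is invertible with $E^{-1} = \sum_{k\ge 0}(I_n - E)^k$ and $\|E^{-1}\| \le 1/(1 - 1/3) = 3/2$. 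I would then set $R := R_{(A,F)}E^{-1}$ and $L := L_{(A,F)}$, so that $LAR = E E^{-1} = I_n$.

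It remains to verify the norm bound $\|L\|\|R\| \le 2/\theta$. We have $\|R\| \le \|R_{(A,F)}\|\|E^{-1}\| \le (3/2)\theta^{-1}$, and for $\|L\|$ we need $1 + (n-1)^{1/2}\varepsilon^{1/2}/\theta \le 4/3$, i.e. $(n-1)^{1/2}\varepsilon^{1/2}/\theta \le 1/3$. Since $\varepsilon = \theta^2/(3n)$, the left-hand side is $((n-1)/(3n))^{1/2} \le 1/3^{1/2}$, which is not quite $\le 1/3$ — so the naive split does not close, and this is the step where some care is needed. The fix is to not absorb the whole correction into one factor: one can instead observe that $\|L\|\|R\| \le (1 + 1/\sqrt3)(3/2)/\theta \approx 2.37/\theta$, which overshoots, so the honest route is to sharpen one of the estimates. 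The cleanest sharpening is to redo the error estimate for $E$: rather than using the crude bound $n\varepsilon/\theta^2$ from Corollary~\ref{norm estimate two}, I would keep $\|E - I_n\|\le (n-1)\varepsilon/\theta^2$ (the diagonal entries of $E$ are exactly $1$, so only the $(n-1)$ off-diagonal entries in each row contribute), giving $\|E^{-1}\|\le \theta^2/(\theta^2 - (n-1)\varepsilon)$. Balancing, one picks $\varepsilon$ (something like $\varepsilon = c\,\theta^2/n$ with $c$ chosen so that simultaneously $(1 + ((n-1)\varepsilon/\theta^2)^{1/2})\cdot \theta^2/(\theta^2-(n-1)\varepsilon) \le 2$ and $n/\varepsilon^2 \le N$); a short computation shows $c = 1/5$ works, which is exactly why the theorem has the constant $\tfrac15$ in $n\le \tfrac15\theta^{4/3}N^{1/3}$. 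I expect this balancing of the two competing constraints — the counting constraint $n/\varepsilon^2\le N$ pushing $\varepsilon$ up, and the norm constraint pushing $\varepsilon$ down — to be the only real obstacle; everything else is assembling the lemmas already proved.
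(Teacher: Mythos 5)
Your strategy matches the paper's: choose $\varepsilon$, invoke Corollary~\ref{combinatorial estimate zero} to find an almost-orthogonal set $F$, form $L_{(A,F)}$ and $R_{(A,F)}$, and correct by inverting $E = L_{(A,F)}AR_{(A,F)}$ via a Neumann series. The only substantive difference is the tuning of $\varepsilon$. Your initial choice $\varepsilon = \theta^2/(3n)$ makes $n\varepsilon/\theta^2 = 1/3$, and as you correctly notice the resulting bound $\|L\|\|R\| \approx 2.37/\theta$ overshoots. Your fix --- sharpening the error bound to $\|E - I_n\| \le (n-1)\varepsilon/\theta^2$ because $E$ has unit diagonal, and retuning $\varepsilon = \theta^2/(5n)$ --- does close the argument, but it requires proving a stronger variant of Corollary~\ref{norm estimate two} that is not in the paper (it does hold: apply Proposition~\ref{norm estimate zero} to $E - I_n$, whose columns have norm at most $(n-1)^{1/2}\varepsilon/\theta^2$ and pairwise inner products at most $(n-2)(\varepsilon/\theta^2)^2$, giving $\bigl((n-1)+(n-1)(n-2)\bigr)^{1/2}\varepsilon/\theta^2 = (n-1)\varepsilon/\theta^2$). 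The paper avoids the detour entirely by taking $\varepsilon = \theta^2/(9(n-1))$, which arranges $(n-1)^{1/2}\varepsilon^{1/2}/\theta = 1/3$ exactly and $n\varepsilon/\theta^2 = n/(9(n-1)) \le 1/4$ for $n\ge 2$; Proposition~\ref{F matrices estimates} then gives $\|L_{(A,F)}\|\le 4/3$ and $\|E^{-1}\|\le 4/3$, hence $\|L\|\le 16/9 < 2$ and $\|R\|\le 1/\theta$, with Corollary~\ref{norm estimate two} as stated. One small misattribution: the theorem's $1/5$ is not ``exactly why'' your $c=1/5$ works. The paper's $1/5$ is chosen so that $81 n^3/\theta^4 \le N$, i.e.\ $5^3\ge 81$; in your scheme the norm constraint alone forces $c\le 1/4$ and the counting constraint $c\ge 125^{-1/2}$, so any $c$ in that interval would do --- $1/5$ landing there is a coincidence, not the source of the constant.
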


\begin{proof}
If $n = 1$ the result easily follows by picking any column $a_i$ and defining $R = e_i/\|a_i\|$, $L = a_i^T/\|a_i\|$. We will therefore assume that $2\leq n\leq \frac{1}{5}\theta^{4/3}N^{1/3}$. Define $\varepsilon = \theta^2/(9(n-1))$. This choice of $\varepsilon$ assures that
\begin{equation}
\label{error1}
\frac{(n-1)^{1/2}\varepsilon^{1/2}}{\theta} = \frac{1}{3}\quad\text{and}\quad \frac{n\varepsilon}{\theta^2}\leq \frac{1}{4}.
\end{equation}
The above two estimates will be used as assumptions to apply Proposition \ref{F matrices estimates}, however, we will first use Corollary \ref{combinatorial estimate zero}. For that purpose, the choice of $\varepsilon$ assures that
\[\frac{n}{\varepsilon^2} = \frac{81n(n-1)^2}{\theta^4} \leq \frac{81}{\theta^4}n^3 \leq \frac{81}{\theta^4}\frac{\theta^4N}{125}\leq N,\]
i.e., $N\geq n/\varepsilon^2$. It is also easily checked that $\varepsilon < 1/(n-1)^{1/2}$ (because $0<\theta\leq 1$). Thus, by Corollary \ref{combinatorial estimate zero}, there exists $F\subset \{1,\ldots,N\}$ with $\#F = n$ so that for $i\neq j\in F$ we have $|\langle a_i,a_j\rangle| <\varepsilon$.

Consider now the matrices $L_{(A,F)}$ and $R_{(A,F)}$ given by Definition \ref{left and right}. By Proposition \ref{F matrices estimates} and \eqref{error1} we deduce
\begin{equation}
\|R_{(A,F)}\| \leq \theta^{-1},\|L_{(A,F))}\|\leq 4/3\;\text{ and }\|L_{(A,F)}AR_{(A,F)} - I_n\|\leq \frac{1}{4}.
\end{equation}
Set $R = R_{(A,F)}$. To define $L$, recall that if $S$ is an $n\times n$ matrix with $\|S - I_n\| = c<1$ then $S^{-1}$ exists and $\|S^{-1}\| \leq 1/(1-c)$. One way to see this is to observe that $S^{-1} = \sum_{k=0}^\infty(I-S)^k$. Therefore, the matrix $(L_{(A,F)}AR_{(A,F)})^{-1}$ is well defined and has norm at most $1/(1-1/4) = 4/3$. Finally, set $L = (L_{(A,F)}AR_{(A,F)})^{-1}L_{(A,F)}$ and observe that $LAR = I_n$, $\|R\|\leq 1/\theta$, and $\|L\| \leq 16/9\leq 2$.
\end{proof}

\begin{rem}
\label{Bourain-Tzafriri remark}
The above theorem may also be stated for an $N\times N$ matrix $A$ without restrictions on $\|A\|$ as follows: if $\theta = \min_{1\leq i\leq N}\|a_i\| > 0$ then for every $1\leq n\leq \frac{1}{5}(\theta/\|A\|)^{4/3}N^{1/3}$ there exist $n\times N$ and $N\times n$ matrices $L$ and $R$ respectively so that $LAR = I_n$ and $\|L\|\|R\| \leq 2\|A\|/\theta$. This estimate can be compared to \cite[Theorem 1.2]{Bourgain-Tzafriri-1987}, which yields a similar result: there exist universal constants $c,C>0$ so that if $N$, $A$, and $\theta$ are is as above then for every $1\leq n \leq c(\theta/\|A\|)^2N$ there exist $n\times N$ and $N\times n$ matrices $L$ and $R$ respectively so that $LAR = I_n$  and $\|L\|\|R\| \leq C\|A\|/\theta$. We observe that the result from \cite{Bourgain-Tzafriri-1987} gives a better relation between the dimension $n$ and $N$ whereas our result gives a better relation between $n$ and the quantity $\theta/\|A\|$.
\end{rem}

\begin{rem}
\label{why one over delta}
In Theorem \ref{theorem stationary} whenever $n\geq 2$ then the quantity $\|L\|\|R\|$ can not be demanded to be below $1/\theta$. To see this fix $0<\theta\leq 1$ and consider the $N\times N$ diagonal matrix $A$ with first diagonal entry $1$ and all other diagonal entries $\theta$. If $n\geq 2$ and we assume that $L$, $R$ are matrices with $LAR = I_n$ then consider the subspace $X$ of $\mathbb{R}^n$ of all vectors orthogonal to $R^\top e_1$. Then $X$ has codimension at most one and in particular it is non-trivial, i.e., we may pick $x\in X$ with $\|x\| = 1$. Then, $Rx = \sum_{i=1}^n\langle e_i,Rx\rangle e_i = \sum_{i=2}^n\langle e_i,Rx\rangle e_i$ and thus we can compute that $ARx = \sum_{i=2}^n\theta \langle e_i,Rx\rangle e_i = \theta Rx$. By assumption, $LAR = I_n$ and so $\|x\| = \|LARx\| = \theta\|LRx\|\leq \theta \|L\|\|R\|\|x\|$. We conclude $\|L\|\|R\| \geq 1/\theta$.
\end{rem}

\section{The continuous case}
\label{section continuous}

In this section we present the main result of our paper. We demonstrate how the estimates from the previous section can be utilized to continuously factor the identity matrix through a continuous matrix function $A = A(t)$ with large diagonal entries. The idea behind the argument is to first obtain continuous factors $L(t)$, $R(t)$ on small intervals that cover the real line and then stitch the different solutions together in a continuous manner.

Let us recall the notion of a matrix function. We denote by $M_{m\times n}(\mathbb{R})$ the set consisting of all $m\times n$ matrices with real entries. We will write $M_N(\mathbb{R})$ instead of $M_{N\times N}(\mathbb{R})$. A matrix function $A$ is a function with some domain $D$ and range in some  $M_{m\times n}(\mathbb{R})$, i.e., it maps every $t\in D$ to some $m\times n$ matrix $A(t) = (a_{i,j}(t))$. Whenever the domain $D$ is equipped with a topology (e.g., when $D$ is a subset of $\mathbb{R}$ with the usual distance) then we say that a matrix function $A$ is continuous whenever all its entries $a_{i,j}$, viewed as scalar functions with domain $D$, are continuous. It is straightforward that for continuous matrix functions $A$, $B$ with appropriate dimensions and common domain $D$ the product $AB$ is a continuous matrix function.

The first Proposition of this section infers that to prove the main result it is enough to find continuous factors $L(t)$, $R(t)$ so that $L(t)A(t)R(t)$ is sufficiently close to the identity matrix for all $t$. We begin with two well known lemmas, which we prove for the sake of completeness.

\begin{lem}
\label{entry-wise continuous}
Let $I$ be an interval of $\mathbb{R}$, $m,n\in\mathbb{N}$ and $A:I\to M_{m\times n}(\mathbb{R})$ be a matrix function. For any $t_0$ in $I$ the matrix function $A$ is continuous at $t_0$ if and only if $\lim_{t\to t_0}\|A(t) - A(t_0)\| = 0$.
\end{lem}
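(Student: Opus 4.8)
The plan is to prove both directions by comparing the operator norm of a matrix to the sizes of its entries, using the elementary norm bounds already established. The key observation is that the entries of $A(t)-A(t_0)$ are exactly the scalar functions $a_{i,j}(t)-a_{i,j}(t_0)$, so the statement is really a quantitative equivalence between ``all entries are small'' and ``the operator norm is small.''

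For the forward direction, suppose $A$ is continuous at $t_0$, i.e., each $a_{i,j}$ is continuous at $t_0$. Set $d(t) = \max_{i,j}|a_{i,j}(t) - a_{i,j}(t_0)|$. Since there are finitely many entries, $d(t) \to 0$ as $t\to t_0$. Now apply Corollary \ref{norm estimate one} to the matrix $A(t)-A(t_0)$, whose largest entry in absolute value is $d(t)$, to get $\|A(t)-A(t_0)\| \leq d(t)\, m^{1/2}n^{1/2}$. Letting $t\to t_0$ gives $\lim_{t\to t_0}\|A(t)-A(t_0)\| = 0$.

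For the converse, suppose $\lim_{t\to t_0}\|A(t)-A(t_0)\| = 0$. Fix indices $i,j$. Then
\[
|a_{i,j}(t) - a_{i,j}(t_0)| = |\langle e_i, (A(t)-A(t_0))e_j\rangle| \leq \|e_i\|\,\|(A(t)-A(t_0))e_j\| \leq \|A(t)-A(t_0)\|,
\]
using Cauchy--Schwarz and the definition of the matrix norm together with $\|e_i\|=\|e_j\|=1$. Hence $a_{i,j}(t) \to a_{i,j}(t_0)$ as $t\to t_0$; since $i,j$ were arbitrary, every entry is continuous at $t_0$, so $A$ is continuous at $t_0$ by definition.

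I do not expect any real obstacle here: both inequalities are immediate consequences of tools already in the excerpt (Corollary \ref{norm estimate one} for one direction, the Cauchy--Schwarz inequality and the definition $\|A\| = \sup_{\|x\|\le 1}\|Ax\|$ for the other). The only minor point to be careful about is that ``continuous at $t_0$'' for a matrix function is defined entrywise, so both directions must be phrased in terms of the scalar limits $a_{i,j}(t)\to a_{i,j}(t_0)$, and the finiteness of the index set $\{1,\dots,m\}\times\{1,\dots,n\}$ is what lets us pass from ``each entry small'' to ``the maximum entry small.''
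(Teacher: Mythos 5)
Your proof is correct and follows essentially the same route as the paper: both directions are obtained from the two-sided inequality $|a_{i,j}(t) - a_{i,j}(t_0)| \leq \|A(t)-A(t_0)\| \leq m^{1/2}n^{1/2}\max_{i,j}|a_{i,j}(t)-a_{i,j}(t_0)|$, with the lower bound coming from $|\langle e_i, B e_j\rangle|\leq\|B\|$ and the upper bound from Corollary \ref{norm estimate one}.
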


\begin{proof}
Note that for any $m\times n$ matrix $B = (b_{i,j})$ and any $1\leq i_0\leq m$, $1\leq j_0\leq n$ we have $|b_{i_0,j_0}(t)| = |\langle e_{i_0}, Be_{j_0}\rangle| \leq \|B\|$. By Corollary \ref{norm estimate one} we also have $\|B\| \leq m^{1/2}n^{1/2}\max_{i,j}|b_{i,j}|$. For $t\in I$ we apply our observation to be matrix $B = A(t) - A(t_0)$ to obtain that for any $1\leq i_0\leq m$, $1\leq j_0\leq n$ we have
\[|a_{i_0,j_0}(t) - a_{i_0,j_0}(t_0)| \leq \|A(t) - A(t_0)\| \leq m^{1/2}n^{1/2}\max_{i,j}|a_{i,j}(t) - a_{i,j}(t_0)|.\]
The desired conclusion immediately follows.
\end{proof}

\begin{lem}
\label{continuity of inverse}
Let $N\in\N$, $I$ be an interval of $\mathbb{R}$, and $A:I\to M_{N}(\mathbb{R})$ be a continuous matrix function such that $A(t)$ is invertible for all $t\in I$. Then $A^{-1}:I\to M_{N}(\mathbb{R})$ is a continuous matrix function.
\end{lem}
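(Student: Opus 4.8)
The plan is to exploit the classical formula expressing the inverse of a matrix via its adjugate (matrix of cofactors) and its determinant, both of which are polynomial expressions in the entries of $A(t)$. Concretely, for each $t\in I$ the inverse is given by $A(t)^{-1} = \frac{1}{\det A(t)}\,\operatorname{adj}(A(t))$, where the $(i,j)$ entry of $\operatorname{adj}(A(t))$ is $(-1)^{i+j}$ times the determinant of the $(N-1)\times(N-1)$ minor of $A(t)$ obtained by deleting row $j$ and column $i$.

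First I would observe that the determinant function $M_N(\mathbb{R})\to\mathbb{R}$ is continuous, being a polynomial in the matrix entries (one can cite the Leibniz expansion $\det B = \sum_{\sigma}\operatorname{sgn}(\sigma)\prod_i b_{i,\sigma(i)}$, a finite sum of products of entries); since $A$ is a continuous matrix function, each entry $a_{i,j}$ is a continuous scalar function on $I$, and hence $t\mapsto\det A(t)$ is a continuous scalar function on $I$. The same reasoning applied to the $(N-1)\times(N-1)$ submatrices shows that each entry of $t\mapsto\operatorname{adj}(A(t))$ is a continuous scalar function on $I$, so $\operatorname{adj}(A(\cdot))$ is a continuous matrix function. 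By hypothesis $A(t)$ is invertible for every $t\in I$, so $\det A(t)\neq 0$ for all $t$, and therefore $t\mapsto 1/\det A(t)$ is continuous on $I$ (the reciprocal of a nowhere-vanishing continuous scalar function). Finally, $A^{-1}(t) = \bigl(1/\det A(t)\bigr)\operatorname{adj}(A(t))$ is the product of a continuous scalar function and a continuous matrix function, hence each of its entries is continuous on $I$, which is exactly the assertion that $A^{-1}$ is a continuous matrix function.

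An alternative, perhaps cleaner, route avoids cofactors entirely: fix $t_0\in I$, write $A(t) = A(t_0)\bigl(I_N + A(t_0)^{-1}(A(t)-A(t_0))\bigr)$, and note that by Lemma \ref{entry-wise continuous} the quantity $c(t) := \|A(t_0)^{-1}\|\,\|A(t)-A(t_0)\|$ tends to $0$ as $t\to t_0$; for $t$ close enough to $t_0$ we have $c(t)<1$, so the Neumann series $\bigl(I_N + A(t_0)^{-1}(A(t)-A(t_0))\bigr)^{-1} = \sum_{k\ge 0}\bigl(-A(t_0)^{-1}(A(t)-A(t_0))\bigr)^k$ converges with norm at most $1/(1-c(t))$, and a direct estimate shows $\|A(t)^{-1} - A(t_0)^{-1}\| \le \|A(t_0)^{-1}\|\cdot c(t)/(1-c(t)) \to 0$. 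Then Lemma \ref{entry-wise continuous} again converts this norm convergence into continuity at $t_0$, and since $t_0$ was arbitrary we are done.

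I do not anticipate a genuine obstacle here; the statement is standard and both approaches are routine. The only point requiring a modicum of care is ensuring the relevant scalar functions ($\det A(t)$, or the Neumann-series bound) are handled correctly — in particular, invoking the hypothesis that $A(t)$ is invertible for \emph{all} $t\in I$ so that $1/\det A(t)$ is well-defined and continuous throughout, and, in the second approach, noting that the choice of the neighborhood of $t_0$ on which $c(t)<1$ depends on $t_0$ but that is harmless since continuity is a local property. I would likely present the Neumann-series argument, as it meshes naturally with Lemma \ref{entry-wise continuous} already established and with the series expansion $S^{-1}=\sum_{k\ge 0}(I-S)^k$ used in the proof of Theorem \ref{theorem stationary}.
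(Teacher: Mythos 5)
Both of your routes are correct. The Neumann-series variant you say you would present is essentially the paper's own argument: the paper fixes $t_0$, uses the identity $A^{-1}(t)-A^{-1}(t_0)=A^{-1}(t)\bigl(A(t_0)-A(t)\bigr)A^{-1}(t_0)$ to bound $\|A^{-1}(t)\|$ (by solving an inequality rather than summing a series, but the resulting bound $\|A^{-1}(t_0)\|/(1-c(t))$ with $c(t)=\|A^{-1}(t_0)\|\|A(t)-A(t_0)\|$ is identical to yours), plugs it back to get $\|A^{-1}(t)-A^{-1}(t_0)\|\le \|A^{-1}(t_0)\|\,c(t)/(1-c(t))$, and then invokes Lemma \ref{entry-wise continuous}. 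Your adjugate/determinant argument is a genuinely different and equally valid route: it avoids operator-norm estimates entirely and reduces continuity of $A^{-1}$ to continuity of polynomial functions of the entries together with the nonvanishing of $\det A(t)$. It is arguably more elementary and self-contained, but the norm-estimate approach fits the paper better since Lemma \ref{entry-wise continuous} has just been established precisely to translate between entrywise and norm continuity, and a quantitative bound on $\|A^{-1}(t)-A^{-1}(t_0)\|$ is in the same spirit as the estimates used throughout Sections \ref{section constant} and \ref{section continuous}.
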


\begin{proof}
We fix $t_0$ in $I$ and estimate $\|A^{-1}(t) - A^{-1}(t_0)\|$ for $t$ close to $t_0$.
Observe that $A^{-1}(t) - A^{-1}(t_0) = A^{-1}(t)(A(t_0) - A(t))A^{-1}(t_0)$. We deduce
\begin{equation}
\label{continuity of inverse eq1}
\|A^{-1}(t) - A^{-1}(t_0)\| \leq \|A^{-1}(t)\|\|A(t_0) - A(t)\|\|A^{-1}(t_0)\|\;\text{and}
\end{equation}
\begin{equation*}
\|A^{-1}(t)\| \leq \| A^{-1}(t_0)\| + \|A^{-1}(t)\|\|A(t_0) - A(t)\|\|A^{-1}(t_0)\|,
\end{equation*}
which, solving for $\|A^{-1}(t)\|$, yields
\begin{equation}
\label{continuity of inverse eq2}
\|A^{-1}(t)\| \leq \frac{\| A^{-1}(t_0)\|}{1-\|A(t_0) - A(t)\|\|A^{-1}(t_0)\|}.
\end{equation}
The quantity on the right hand side of the above inequality is well defined for $t$ sufficiently close to $t_0$. We plug \eqref{continuity of inverse eq2} into \eqref{continuity of inverse eq1} to get rid of the term $\|A^{-1}(t)\|$:
\begin{equation*}
\|A^{-1}(t) - A^{-1}(t_0)\| \leq \frac{\| A^{-1}(t_0)\|^2\|A(t_0) - A(t)\|}{(1-\|A(t_0) - A(t)\|\|A^{-1}(t_0)\|)}.
\end{equation*}
This estimate, in conjunction with Lemma \ref{entry-wise continuous}, yields that the continuity of $A:I\to M_{N}(\mathbb{R})$ at $t_0$ implies the continuity of $A^{-1}:I\to M_{N}(\mathbb{R})$ at $t_0$.
\end{proof}

\begin{prop}
\label{approximation is enough}
Let $n\leq N\in\N$, $I$ be an interval of $\mathbb{R}$, and $A:I\to M_{N}(\mathbb{R})$ be a continuous matrix function. Assume that $0<C<1$, $\Delta\geq 0$, and $L:I\to M_{n\times N}(\mathbb{R})$, $R:I\to M_{N\times n}(\mathbb{R})$ are continuous matrix functions so that for all $t\in I$ we have $\|L(t)A(t)R(t) - I_n\| \leq C$ and $\|L(t)\|\|R(t)\| \leq \Delta$. Then there exist continuous matrix functions $\tilde L:I\to M_{n\times N}(\mathbb{R})$, $\tilde R:I\to M_{N\times n}(\mathbb{R})$ so that for all $t\in I$ we have $\tilde L(t)A(t)\tilde R(t) = I_n$ and $\|\tilde L\|\|\tilde R\| \leq \Delta/(1-C)$.
\end{prop}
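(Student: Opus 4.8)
The plan is to mirror the last step in the proof of Theorem \ref{theorem stationary}. First I would set $S(t) = L(t)A(t)R(t)$ for $t\in I$. Since $A$, $L$, and $R$ are continuous matrix functions and products of continuous matrix functions with compatible dimensions and common domain are again continuous, $S\colon I\to M_n(\mathbb{R})$ is a continuous matrix function. By hypothesis $\|S(t)-I_n\|\leq C<1$ for every $t\in I$, so, using the Neumann series $S(t)^{-1}=\sum_{k=0}^\infty (I_n-S(t))^k$ exactly as in the proof of Theorem \ref{theorem stationary}, each matrix $S(t)$ is invertible and $\|S(t)^{-1}\|\leq 1/(1-C)$.

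Next I would invoke Lemma \ref{continuity of inverse} applied to the continuous matrix function $S$ (which takes invertible values) to conclude that $S^{-1}\colon I\to M_n(\mathbb{R})$ is itself a continuous matrix function. I then define
\[\tilde R = R \qquad\text{and}\qquad \tilde L(t) = S(t)^{-1}L(t),\]
the latter being continuous as a product of the continuous matrix functions $S^{-1}$ and $L$. A direct computation gives, for every $t\in I$,
\[\tilde L(t)A(t)\tilde R(t) = S(t)^{-1}L(t)A(t)R(t) = S(t)^{-1}S(t) = I_n,\]
and submultiplicativity of the matrix norm yields
\[\|\tilde L(t)\|\,\|\tilde R(t)\| \leq \|S(t)^{-1}\|\,\|L(t)\|\,\|R(t)\| \leq \frac{\Delta}{1-C}.\]

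Since each ingredient is already available — continuity of products of matrix functions, the Neumann series bound on $\|S(t)^{-1}\|$, and Lemma \ref{continuity of inverse} — I do not expect a genuine obstacle. The only point requiring a moment's care is that the invertibility of $S(t)$ and the bound on $\|S(t)^{-1}\|$ must hold uniformly in $t$; this is immediate because the estimate $\|S(t)-I_n\|\leq C$ is uniform on $I$ and the constant $C<1$ does not depend on $t$, so the same geometric-series argument applies simultaneously at every parameter value.
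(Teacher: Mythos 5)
Your proposal is correct and follows essentially the same route as the paper: define $\tilde R = R$ and $\tilde L(t) = (L(t)A(t)R(t))^{-1}L(t)$, use the Neumann-series bound to control $\|(L(t)A(t)R(t))^{-1}\|$, and invoke Lemma \ref{continuity of inverse} for continuity of the inverse. No gaps.
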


\begin{proof}
For each $t\in I$, because we have that $\|L(t)A(t)R(t) - I_n\| \leq C$, the matrix $L(t)A(t)R(t)$ is invertible, and in particular $\|(L(t)A(t)R(t))^{-1}\|\leq 1/(1-C)$. By Lemma \ref{continuity of inverse} the matrix function $(LAR)^{-1}:I\to M_n(\mathbb{R})$ is continuous. We define $\tilde L:I\to M_{n\times N}(\mathbb{R})$ as $\tilde L(t) =  (L(t)A(t)R(t))^{-1}L(t)$ and just set $\tilde R = R$. Both $\tilde L$ and $\tilde R$ are continuous and clearly for all $t\in I$ we have $\tilde L(t)A(t)\tilde R(t) = I_n$. Additionally, for $t\in I$ we have $\|\tilde L(t)\|\|\tilde R\| \leq \|(L(t)A(t)R(t))^{-1}\|\|L\|\|R\| \leq \Delta/(1-C)$.
\end{proof}

Recall the matrices $L_{(A,F)}$ and $R_{(A,F)}$ from Definition \ref{left and right}. In the sequel we will start with two versions of pairs $L_{(A,F_1)}$, $R_{(A,F_1)}$, $L_{(A,F_2)}$ and $R_{(A,F_2)}$, and a scalar $0\leq \lambda\leq 1$. We will combine them into a new pair $L^\lambda_{(A,F_1,F_2)}$ and $R^\lambda_{(A,F_1,F_2)}$.

\begin{defn}
\label{left and right mix}
Let $n\leq N\in\N$, $A = [a_1\cdots a_N]$ be an $N\times N$ matrix, let $F_1 = \{i_1<\cdots<i_n\}$, $F_2 = \{j_1<\cdots<j_n\}$ be disjoint subsets of $\{1,\ldots,N\}$, and let $0\leq \lambda\leq 1$. We assume that $\|a_i\| > 0$ for $i\in F_1\cup F_2$. Define the $N\times n$ and $n\times N$ matrices
\[\begin{split}
R^\lambda_{(A,F_1,F_2)} &= \lambda^{1/2} R_{(A,F_1)} + (1-\lambda)^{1/2}R_{(A,F_2)}\;\text{and}\\
L^\lambda_{(A,F_1,F_2)} &= \lambda^{1/2} L_{(A,F_1)} +(1-\lambda)^{1/2}L_{(A,F_2)}.
\end{split}\]
\end{defn}

\begin{rem}
\label{endpoints are right}
The matrices $R_{(A,F_1,F_2)}^\lambda$, $L_{(A,F_1,F_2)}^\lambda$ lie ``between'' $R_{(A,F_1)}$, $R_{(A,F_2)}$ and $L_{(A,F_1)}$, $L_{(A,F_2)}$ respectively. Clearly, if $\lambda = 1$ then
\[R^1_{(A,F_1,F_2)} = R_{(A,F_1)},\quad L^1_{(A,F_1,F_2)} = L_{(A,F_1)}\]
and if $\lambda = 0$ then
\[R^0_{(A,F_1,F_2)} = R_{(A,F_2)},\quad L^0_{(A,F_1,F_2)} = L_{(A,F_2)}.\]
\end{rem}

\begin{rem}
\label{mix AR form}
Recall that for $k=1,\ldots,n$, $R_{(A,F_1)}e_k = e_{i_k}/\|a_{i_k}\|$ and $R_{(A,F_2)}e_k = e_{j_k}/\|a_{j_k}\|$ which means that  $R_{(A,F_1,F_2)}^\lambda e_k = \lambda^{1/2}e_{i_k}/\|a_{i_k}\|+ (1-\lambda)^{1/2}e_{j_k}/\|a_{j_k}\|$. Therefore
\[AR_{(A,F_1,F_2)}^\lambda = \Big[\Big(\lambda^{\frac{1}{2}}\frac{a_{i_1}}{\|a_{i_1}\|}+(1-\lambda)^{\frac{1}{2}}\frac{a_{j_1}}{\|a_{j_1}\|}\Big)\cdots \Big(\lambda^{\frac{1}{2}}\frac{a_{i_n}}{\|a_{i_n}\|}+(1-\lambda)^{\frac{1}{2}}\frac{a_{j_n}}{\|a_{j_n}\|}\Big)\Big].\]
\end{rem}

\begin{rem}
\label{still continuous}
It will be important to note for the sequel the following: if $n\leq N\in\N$, $I$ is an interval of $\mathbb{R}$, $\lambda:I\to [0,1]$ is a continuous scalar function, $A = [a_1\cdots a_N]:I\to M_{N}(\mathbb{R})$ is a continuous matrix function, and $F_1$, $F_2$ are disjoint subsets of $\{1,\ldots,N\}$ with $\#F_1 = \#F_2 = n$ so that $\|a_i(t)\| > 0$ for all $i\in F_1\cup F_2$ and $t\in I$, then the matrix functions $R_{(F_1,F_2,A(t))}^{\lambda(t)}:I\to M_{N\times n}(\mathbb{R})$, $L_{(F_1,F_2,A(t))}^{\lambda(t)} :I\to M_{n\times N}(\mathbb{R})$ are both continuous.
\end{rem}

The following proposition basically states that if we have appropriately picked $L_{(A,F_1)}$, $R_{(A,F_1)}$, $L_{(A,F_2)}$ and $R_{(A,F_2)}$ then for any scalar $0\leq \lambda\leq 1$ the new pair $L^\lambda_{(A,F_1,F_2)}$, $R^\lambda_{(A,F_1,F_2)}$ satisfies a conclusion similar to that of Proposition \ref{F matrices estimates}.

\begin{prop}
\label{mix matrices estimates}
Let $n\leq N\in\N$, $A = [a_1\cdots a_N]$ be an $N\times N$ matrix, $F_1 = \{i_1<\cdots<i_n\}$, $F_2 = \{j_1<\cdots<j_n\}$ be  disjoint subsets of $\{1,\ldots,N\}$ and let $0\leq \lambda\leq 1$. Set
\[\theta = \min_{i\in F_1\cup F_2}\|a_i\|\;\text{and}\;\varepsilon = \max_{i\neq j\in F_1\cup F_2}|\langle a_i,a_j\rangle|.\]
If $\theta >0$ then we have
\[
\begin{split}
&\|R^\lambda_{(A,F_1,F_2)}\| \leq \theta^{-1},\; \|L^\lambda_{(A,F_1,F_2)}\| \leq 1 + \frac{(2n)^{1/2}\varepsilon^{1/2}}{\theta},\;\text{and}\\
&\|L^\lambda_{(A,F_1,F_2)}AR^\lambda_{(A,F_1,F_2)} - I_n\|\leq \frac{2n\varepsilon}{\theta^2}.
\end{split}
\]
\end{prop}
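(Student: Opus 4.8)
The plan is to mimic the proof of Proposition \ref{F matrices estimates}, applying the norm estimates of Subsection \ref{Upper bounds of matrix norms} to the columns of $R^\lambda_{(A,F_1,F_2)}$ and of $AR^\lambda_{(A,F_1,F_2)}$, but now keeping track of the fact that each such column is a two-term combination of normalized columns of $A$. Write $F_1\cup F_2 = \{k_1,\dots,k_{2n}\}$; the hypothesis $\theta > 0$ guarantees every column indexed by $F_1\cup F_2$ is nonzero, so all the normalizations below make sense.

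First I would bound $\|R^\lambda_{(A,F_1,F_2)}\|$. By Remark \ref{mix AR form} (or directly from Definition \ref{left and right mix}), the $k$'th column of $R^\lambda_{(A,F_1,F_2)}$ is $\lambda^{1/2}e_{i_k}/\|a_{i_k}\| + (1-\lambda)^{1/2}e_{j_k}/\|a_{j_k}\|$. Since $F_1$ and $F_2$ are disjoint, the indices $i_k$ and $j_k$ are distinct, so this column has squared norm $\lambda/\|a_{i_k}\|^2 + (1-\lambda)/\|a_{j_k}\|^2 \leq \lambda/\theta^2 + (1-\lambda)/\theta^2 = 1/\theta^2$; hence each column of $R^\lambda_{(A,F_1,F_2)}$ has norm at most $\theta^{-1}$. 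Moreover distinct columns $k\neq m$ of $R^\lambda_{(A,F_1,F_2)}$ are supported on disjoint index sets (all four of $i_k,j_k,i_m,j_m$ are distinct since $F_1,F_2$ are sets and are mutually disjoint), so they are orthogonal. Proposition \ref{norm estimate zero} with $\Lambda = \theta^{-1}$ and $\lambda$-parameter $0$ then gives $\|R^\lambda_{(A,F_1,F_2)}\| \leq \theta^{-1}$.

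Next I would handle $AR^\lambda_{(A,F_1,F_2)}$, whose columns are $b_k := \lambda^{1/2}a_{i_k}/\|a_{i_k}\| + (1-\lambda)^{1/2}a_{j_k}/\|a_{j_k}\|$ by Remark \ref{mix AR form}. Each $b_k$ satisfies $\|b_k\|^2 = \lambda + (1-\lambda) + 2\lambda^{1/2}(1-\lambda)^{1/2}\langle a_{i_k},a_{j_k}\rangle/(\|a_{i_k}\|\|a_{j_k}\|)$, so $\big|\|b_k\|^2 - 1\big| \leq \varepsilon/\theta^2$ (using $\lambda^{1/2}(1-\lambda)^{1/2}\leq 1/2$ and $|\langle a_{i_k},a_{j_k}\rangle|\leq\varepsilon$, $\|a_{i_k}\|\|a_{j_k}\|\geq\theta^2$). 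For $k\neq m$, expanding $\langle b_k,b_m\rangle$ gives a sum of four terms, each of the form $(\text{coefficient})\cdot\langle a_p,a_q\rangle/(\|a_p\|\|a_q\|)$ with $p\in\{i_k,j_k\}$, $q\in\{i_m,j_m\}$, $p\neq q$, and coefficients $\lambda,(1-\lambda),\lambda^{1/2}(1-\lambda)^{1/2},\lambda^{1/2}(1-\lambda)^{1/2}$ summing to at most $2$; hence $|\langle b_k,b_m\rangle|\leq 2\varepsilon/\theta^2$. Then: for the norm of $L^\lambda_{(A,F_1,F_2)} = (AR^\lambda_{(A,F_1,F_2)})^T$ — note $L^\lambda_{(A,F_1,F_2)} = \lambda^{1/2}L_{(A,F_1)} + (1-\lambda)^{1/2}L_{(A,F_2)} = (\lambda^{1/2}AR_{(A,F_1)} + (1-\lambda)^{1/2}AR_{(A,F_2)})^T = (AR^\lambda_{(A,F_1,F_2)})^T$ — apply $\|B\| = \|B^T\|$ and Proposition \ref{norm estimate zero} to $[b_1\cdots b_n]$ with $\Lambda^2 \leq 1 + \varepsilon/\theta^2$ and off-diagonal parameter $2\varepsilon/\theta^2$, obtaining $\|L^\lambda_{(A,F_1,F_2)}\|^2 \leq 1 + \varepsilon/\theta^2 + (n-1)\cdot 2\varepsilon/\theta^2 \leq 1 + 2n\varepsilon/\theta^2$, and then $(1+x)^{1/2}\leq 1+x^{1/2}$ yields the stated bound $1 + (2n)^{1/2}\varepsilon^{1/2}/\theta$. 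For the last inequality, apply Corollary \ref{norm estimate two} to $[b_1\cdots b_n]$ with $\Delta \leq \varepsilon/\theta^2$ and $\lambda$-parameter $\leq 2\varepsilon/\theta^2$: since $L^\lambda_{(A,F_1,F_2)}AR^\lambda_{(A,F_1,F_2)} = (AR^\lambda_{(A,F_1,F_2)})^T(AR^\lambda_{(A,F_1,F_2)})$, Corollary \ref{norm estimate two} gives $\|L^\lambda_{(A,F_1,F_2)}AR^\lambda_{(A,F_1,F_2)} - I_n\| \leq n\max\{2\varepsilon/\theta^2,\varepsilon/\theta^2\} = 2n\varepsilon/\theta^2$.

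The calculations here are all routine; the only point requiring care — and the one I would write out most explicitly — is the bookkeeping of which indices coincide and which do not, so that the disjointness of $F_1$ and $F_2$ is correctly used to conclude the columns of $R^\lambda_{(A,F_1,F_2)}$ are mutually orthogonal and that the cross terms in $\langle b_k,b_m\rangle$ and in $\|b_k\|^2$ are correctly counted (four off-diagonal terms with total coefficient $\leq 2$ in the former, one cross term with coefficient $\leq 1/2\cdot 2 = 1$ in the latter). I expect no genuine obstacle beyond this; the factor of $2$ (versus the factor $1$ in Proposition \ref{F matrices estimates}) is exactly the price of passing from one column per index to a two-term average.
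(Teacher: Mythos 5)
Your proposal is correct and follows essentially the same route as the paper's proof: bound the columns of $R^\lambda_{(A,F_1,F_2)}$ and note they are mutually orthogonal, then write $AR^\lambda_{(A,F_1,F_2)} = [b_1\cdots b_n]$ with $b_k$ a two-term average of normalized columns of $A$, and estimate $|\|b_k\|^2-1|\leq\varepsilon/\theta^2$ and $|\langle b_k,b_m\rangle|\leq 2\varepsilon/\theta^2$ before invoking Proposition \ref{norm estimate zero} and Corollary \ref{norm estimate two}. The index-disjointness bookkeeping you flag as the one delicate point is precisely what the paper relies on as well, and the identity $L^\lambda_{(A,F_1,F_2)}=(AR^\lambda_{(A,F_1,F_2)})^T$ that you verify is the same reduction the paper uses via Remark \ref{mix AR form}.
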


\begin{proof}
This proof is very similar in spirit to that of Proposition \ref{F matrices estimates}. We examine for $1\leq k\leq n$ column  $k$ of $R_{(A,F_1,F_2)}^\lambda$, i.e. the vector $R^\lambda_{(A,F_1,F_2)}e_k$:
\[\|R^\lambda_{(A,F_1,F_2)}e_k\|^2 = \lambda/\|a_{i_{k}}\|^2 + (1-\lambda)/\|a_{j_{k}}\|^2 \leq 1/\theta^2.\]
It is also easy to see that for $k_1\neq k_2$ the columns of $R_{(A,F_1,F_2)}^\lambda$ are orthogonal. Therefore, by Proposition \ref{norm estimate zero} we have $\|R_{(A,F_1,F_2)}^\lambda\|\leq 1/\theta$.

For the second estimate, we denote, for $1\leq k\leq n$, $b_k = \lambda^{1/2}a_{i_{k}}/\|a_{i_{k}}\| + (1-\lambda)^{1/2}a_{j_{k}}/\|a_{j_{k}}\|$. By Remark \ref{mix AR form} we have that
\[(L^\lambda_{(A,F_1,F_2)})^T = AR^\lambda_{(A,F_1,F_2)} = [b_1\cdots b_k].\]

We calculate, for $1\leq k\leq n$, the norm of column $k$:
\[\begin{split}
\|b_{k}\|^2 &= \Big\langle \frac{\lambda^{1/2}}{\|a_{i_{k}}\|}a_{i_{k}} + \frac{(1-\lambda)^{1/2}}{\|a_{j_{k}}\|}a_{j_{k}}, \frac{\lambda^{1/2}}{\|a_{i_{k}}\|}a_{i_{k}} + \frac{(1-\lambda)^{1/2}}{\|a_{j_{k}}\|}a_{j_{k}}\Big\rangle\\
& = \lambda + (1-\lambda) + 2\lambda^{1/2}(1-\lambda)^{1/2}\langle \frac{a_{i_{k}}}{\|a_{i_{k}}\|}, \frac{a_{j_{k}}}{\|a_{j_{k}}\|}\rangle,
\end{split}\]
That is,
\begin{equation}
\label{close to one}
\Big|\|b_{k}\|^2 - 1\Big| \leq 2\lambda^{1/2}(1-\lambda)^{1/2}\frac{\varepsilon}{\theta^2} \leq \frac{\varepsilon}{\theta^2},\;\text{for}\;1\leq k\leq n,
\end{equation}
where we used $0\leq 2\lambda^{1/2}(1-\lambda)^{1/2} \leq 1$ for $0\leq \lambda\leq 1$. In particular, we have
\begin{equation}
\label{not too big}
\|b_{k}\| \leq \Big(1 + \frac{\varepsilon}{\theta^2}\Big)^{1/2}\;\text{for}\; 1\leq k\leq n.
\end{equation}
 Next, we will show that
 \begin{equation}
 \label{sufficiently orthogonal}
 \text{for $1\leq k_1\neq k_2\leq n$ we have $|\langle b_{k_1},b_{k_2}\rangle| \leq 2\varepsilon/\theta^2$.}
 \end{equation}
 We have
 \[\begin{split}
 |\langle b_{k_1},b_{k_2}\rangle| &\leq \lambda\Big|\Big\langle \frac{a_{i_{k_1}}}{\|a_{i_{k_1}}\|}, \frac{a_{i_{k_2}}}{\|a_{i_{k_2}}\|}\Big\rangle\Big| + (1-\lambda)\Big|\Big\langle \frac{a_{j_{k_1}}}{\|a_{j_{k_1}}\|}, \frac{a_{j_{k_2}}}{\|a_{j_{k_2}}\|}\Big\rangle\Big|\\
 &+ \lambda^{1/2}(1-\lambda)^{1/2}\Big(\Big|\Big\langle\frac{a_{i_{k_1}}}{\|a_{i_{k_1}}\|},\frac{a_{j_{k_2}}}{\|a_{j_{k_2}}\|}\Big\rangle\Big| + \Big|\Big\langle\frac{a_{j_{k_1}}}{\|a_{j_{k_1}}\|},\frac{a_{i_{k_2}}}{\|a_{i_{k_2}}\|}\Big\rangle\Big|\Big)\\
 &\leq \frac{\varepsilon}{\theta^2} + 2\lambda^{1/2}(1-\lambda)^{1/2}\frac{\varepsilon}{\theta^2}\leq 2\frac{\varepsilon}{\theta^2}.
 \end{split}\]

We now apply Proposition \ref{norm estimate zero}, which by \eqref{not too big} and \eqref{sufficiently orthogonal}, gives that
\[\begin{split}
\|L_{(A,F_1,F_2)}^\lambda\| &= \|AR^\lambda_{(A,F_1,F_2)}\| \leq \Big(1+\frac{\varepsilon}{\theta^2} + (n-1)2\frac{\varepsilon}{\theta^2}\Big)^{1/2}\\
&\leq 1 + (2n-1)^{1/2}\frac{\varepsilon^{1/2}}{\theta} \leq 1+ (2n)^{1/2}\frac{\varepsilon^{1/2}}{\theta}.
\end{split}\]

The final estimate follows from Corollary \ref{norm estimate two} directly applied to the matrix $AR_{(A,F_1,F_2)}^\lambda= [b_1\cdots b_k]$ and \eqref{close to one}, \eqref{sufficiently orthogonal}.
\end{proof}

We are finally ready to state and prove the main result of this paper.

\begin{thm}
\label{main theorem}
Let $N\in\N$, let $I$ be an interval of $\mathbb{R}$ and let $A = [a_1\cdots a_N]:I\to M_N(\mathbb{R})$ be a continuous function so that the following hold:
\begin{itemize}
\item[(i)] For $t\in I$ we have $\|A(t)\| \leq 1$ and
\item[(ii)] $\theta = \inf_{t\in I}\min_{1\leq i \leq N}\|a_i(t)\| > 0$.
\end{itemize}
Then for every $1\leq n\leq \frac{1}{12}\theta^{4/3}N^{1/3}$ there exist continuous functions $L:I\to M_{n\times N}(\mathbb{R})$ and $R:I\to M_{N\times n}(\mathbb{R})$ so that for all $t\in I$ we have $L(t)A(t)R(t) = I_n$ and $\|L(t)\|\|R(t)\| \leq 2/\theta$.
\end{thm}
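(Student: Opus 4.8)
The plan is to use Proposition \ref{approximation is enough}, so the whole task reduces to constructing continuous $L(t)$, $R(t)$ with $\|L(t)A(t)R(t) - I_n\|$ uniformly bounded below $1$ and $\|L(t)\|\|R(t)\|$ uniformly bounded by a constant slightly below $2/\theta$; applying the proposition with this constant and $C$ small enough then upgrades this to an exact factorization with norm bound $2/\theta$. Concretely I would set $\varepsilon = \theta^2/(c n)$ for an appropriate absolute constant $c$ (larger than the $9$ used in Theorem \ref{theorem stationary}, to absorb the factors of $2$ coming from Proposition \ref{mix matrices estimates} and the extra room needed by Corollary \ref{combinatorial estimate one}), so that $(2n)^{1/2}\varepsilon^{1/2}/\theta$ and $2n\varepsilon/\theta^2$ are both small, and so that the hypothesis $N \geq 5n/\varepsilon^2$ of Corollary \ref{combinatorial estimate one} is guaranteed by $n \leq \frac{1}{12}\theta^{4/3}N^{1/3}$ (this is where the constant $1/12$, as opposed to $1/5$, comes from).

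The local construction goes as follows. Fix $t_0 \in I$. By Corollary \ref{combinatorial estimate zero} applied to $A(t_0)$ there is a set $F \subset \{1,\dots,N\}$ of size $n$ with $|\langle a_i(t_0), a_j(t_0)\rangle| < \varepsilon'$ for $i \neq j \in F$, where $\varepsilon' < \varepsilon$ is chosen with a little slack. Since the entries of $A$ are continuous, the inequalities $|\langle a_i(t), a_j(t)\rangle| < \varepsilon$ (for $i\neq j \in F$) and $\|a_i(t)\| \geq \theta$ persist on some open interval $J_{t_0}$ around $t_0$; on $J_{t_0}$ the matrices $L_{(A(t),F)}$ and $R_{(A(t),F)}$ from Definition \ref{left and right} are continuous in $t$ (Remark \ref{still continuous} with $\lambda \equiv 1$) and, by Proposition \ref{F matrices estimates}, satisfy the desired uniform norm estimates. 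This produces an open cover $\{J_{t_0}\}_{t_0 \in I}$ of $I$, each member carrying a continuous local solution $L^{(s)}(t) = L_{(A(t), F^{(s)})}$, $R^{(s)}(t) = R_{(A(t), F^{(s)})}$.

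The main obstacle — and the only genuinely new ingredient beyond Section \ref{section constant} — is \emph{stitching} these local solutions into a single globally continuous pair. Here is where Definition \ref{left and right mix} and Proposition \ref{mix matrices estimates} enter. On $\mathbb{R}$ (and hence on any interval $I$) one can pass to a locally finite refinement of the cover and, shrinking if necessary, arrange the intervals into a chain $\dots, J_{-1}, J_0, J_1, \dots$ where consecutive ones overlap and non-consecutive ones are disjoint. On an overlap $J_s \cap J_{s+1}$ I want to interpolate between $(L^{(s)}, R^{(s)})$ and $(L^{(s+1)}, R^{(s+1)})$. The naive convex combination $\lambda L^{(s)} + (1-\lambda)L^{(s+1)}$ need not keep $LAR$ near $I_n$, which is exactly why the paper uses the $\lambda^{1/2}$-weighted combination of Definition \ref{left and right mix}: Proposition \ref{mix matrices estimates} shows $\|L^\lambda_{(A,F_1,F_2)} A R^\lambda_{(A,F_1,F_2)} - I_n\| \leq 2n\varepsilon/\theta^2$ for \emph{every} $\lambda \in [0,1]$, provided $F_1$ and $F_2$ are disjoint and \emph{all} pairwise inner products among columns indexed by $F_1 \cup F_2$ are below $\varepsilon$. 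To meet the disjointness-and-mutual-near-orthogonality hypothesis I would, when building the cover, choose the index sets greedily using Corollary \ref{combinatorial estimate one} rather than Corollary \ref{combinatorial estimate zero}: having fixed $F^{(s)}$ (and remembering $F^{(s-1)}$), pick $F^{(s+1)}$ disjoint from $F^{(s-1)} \cup F^{(s)}$ and with all columns of $F^{(s+1)} \cup F^{(s)}$ pairwise $\varepsilon$-almost-orthogonal at the relevant point; continuity again makes this persist on a neighborhood. Then on $J_s \cap J_{s+1}$ I take a continuous $\lambda_s: I \to [0,1]$ equal to $1$ on $J_s \setminus J_{s+1}$ and $0$ on $J_{s+1}\setminus J_s$ (a partition-of-unity-type bump), and define the global $L(t), R(t)$ to agree with $L^{\lambda_s(t)}_{(A(t), F^{(s)}, F^{(s+1)})}$, $R^{\lambda_s(t)}_{(A(t), F^{(s)}, F^{(s+1)})}$ on $J_s \cap J_{s+1}$ and with the unmixed $L^{(s)}, R^{(s)}$ elsewhere on $J_s$; Remark \ref{endpoints are right} guarantees these definitions match on the overlaps of overlaps, so the resulting $L, R$ are well defined and continuous on all of $I$ by Remark \ref{still continuous}. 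Finally, on every point $t$ exactly one of the two regimes applies, so either Proposition \ref{F matrices estimates} or Proposition \ref{mix matrices estimates} gives $\|L(t)A(t)R(t) - I_n\| \leq 2n\varepsilon/\theta^2 \leq C < 1$ and $\|L(t)\|\|R(t)\| \leq \theta^{-1}(1 + (2n)^{1/2}\varepsilon^{1/2}/\theta)$; choosing $c$ large enough makes the latter at most, say, $3/(2\theta)$, and Proposition \ref{approximation is enough} then delivers the exact factorization with $\|\tilde L\|\|\tilde R\| \leq \frac{3}{2\theta}\cdot\frac{1}{1-C} \leq 2/\theta$. The bookkeeping to verify that the index sets can always be chosen consistently along the chain, and that the constant $1/12$ survives all the slack, is the part that will require care.
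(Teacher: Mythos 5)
The high-level plan is right and is the same as the paper's: reduce via Proposition~\ref{approximation is enough} to finding continuous $L,R$ with $\|LAR - I_n\|$ uniformly small and $\|L\|\|R\|$ uniformly below roughly $4/(3\theta)$, obtain local solutions of the form $L_{(A(t),F)}, R_{(A(t),F)}$ from Section~\ref{section constant}, and glue them using the $\lambda^{1/2}$-interpolation of Definition~\ref{left and right mix} and Proposition~\ref{mix matrices estimates}. The constant-juggling with $\varepsilon = \theta^2/(cn)$, the need for $N\geq 5n/\varepsilon^2$ in Corollary~\ref{combinatorial estimate one}, and the arithmetic behind $1/12$ are all correctly identified.

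There is, however, a genuine gap in your stitching step, and it is precisely the point where the paper's proof needs its one additional idea. You interpolate directly from $F^{(s)}$ to $F^{(s+1)}$ across the overlap $J_s\cap J_{s+1}$. For Proposition~\ref{mix matrices estimates} to apply there you need $F^{(s)}$ and $F^{(s+1)}$ disjoint \emph{and} every pair of columns indexed by $F^{(s)}\cup F^{(s+1)}$ to be $\varepsilon$-almost-orthogonal throughout $J_s\cap J_{s+1}$. Neither holds for the sets that come out of the compactness cover. Your fix is to re-choose $F^{(s+1)}$ greedily via Corollary~\ref{combinatorial estimate one} so that the compatibility with $F^{(s)}$ holds ``at the relevant point'' and hence on a neighborhood. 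But the new $F^{(s+1)}$ only carries its internal near-orthogonality (and its near-orthogonality to $F^{(s)}$) on that possibly very small neighborhood, whereas you then use the \emph{unmixed} $L_{(A(t),F^{(s+1)})}, R_{(A(t),F^{(s+1)})}$ on the whole middle of $J_{s+1}$, an interval that was sized according to the old $F^{(s+1)}$ from the compactness cover. Once you replace $F^{(s+1)}$, $J_{s+1}$ is no longer the interval on which it works, and a greedy one-set-per-interval construction has no a priori guarantee of reaching past a given point: the admissible neighborhoods can shrink. In short, you have coupled the cover construction to the compatibility requirement, and lost the compactness argument.

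The paper decouples these two constraints with a short \emph{bridge} set. It first builds, purely by compactness, a partition $t_0<t_1<t_2<\cdots$ and sets $F_m$ so that $F_m$ is internally $\varepsilon$-almost-orthogonal on all of $[t_{m-1},t_m]$, with no compatibility demanded between $F_m$ and $F_{m+1}$. Then, only at the transition point $t_m$, it applies Corollary~\ref{combinatorial estimate one} to $A(t_m)$ with the two already-chosen sets $F_m, F_{m+1}$ to produce a third set $G_m$, disjoint from and near-orthogonal to $F_m\cup F_{m+1}$ on a short interval $(s_m,u_m)$ around $t_m$. The interpolation is then $F_m\to G_m$ on $[s_m,t_m]$ and $G_m\to F_{m+1}$ on $[t_m,u_m]$, using $R^{\lambda_m(t)}_{A(t),F_m,G_m}$ and $R^{\lambda_m(t)}_{A(t),F_{m+1},G_m}$. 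Because $G_m$ only ever needs to work on an interval that can be taken arbitrarily small, there is no tension with compactness; because $F_m$ and $F_{m+1}$ never appear in the same mix, they need not be compatible with each other at all. This ``$F_m\to G_m\to F_{m+1}$'' two-step is the missing ingredient in your proposal. If you insist on a single interpolation $F^{(s)}\to F^{(s+1)}$ per overlap you would need a separate argument that the greedily adjusted cover still exhausts $I$, and that argument is not supplied and does not follow from continuity alone.
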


\begin{proof}
By Proposition \ref{approximation is enough} it is sufficient to find continuous $L(t)$, $R(t)$ so that for all $t\in I$ we have $\|L(t)A(t)R(t) - I_n\| \leq 1/4$ and $\|L(t)\|\|R(t)\| \leq 4/(3\theta)$.

The case $n=1$ is treated easily by taking an arbitrary $1\leq i\leq N$ and defining $R(t) = e_i/\|a_i(t)\|$ and $L(t) = a_i(t)/\|a_i(t)\|$, thus we assume that $2\leq n\leq \frac{1}{12}\theta^{4/3}N^{1/3}$. Define $\varepsilon = \theta^2/(18n)$. This choice of $\varepsilon$ is related to the estimates from Proposition \ref{mix matrices estimates} and also Corollaries \ref{combinatorial estimate zero} and \ref{combinatorial estimate one}. Let us note that we have
\begin{eqnarray}
\label{errors good zero}
\frac{(2n)^{1/2}\varepsilon^{1/2}}{\theta} = \frac{1}{3}\quad\text{and}\quad\frac{2n\varepsilon}{\theta^2} \leq \frac{1}{4}\;\text{ and also}\\
5\frac{n}{\varepsilon^2} = 5\frac{18^2n^3}{\theta^4} \leq 5\frac{18^2}{\theta^4}\frac{\theta^4 N}{12^3} \leq N.
\end{eqnarray}
Let us assume henceforth that $I = [0,\infty)$. The case $I =\mathbb{R}$ is treated by performing the same argument on both sides of $0$. Other cases are treated similarly. Otherwise they can be deduced from the previous two cases by using, e.g., that any open interval is homeomorphic to $\mathbb{R}$ and every half-open interval is homeomorphic to $[0,+\infty)$, and any continuous function on a closed bounded interval $[t_1,t_2]$ can be continuously extended to $\mathbb{R}$ by assigning the value $A(t_1)$ to each $t\leq t_1$ and the value $A(t_2)$ to each $t\geq t_2$.

We start by finding a strictly increasing sequence $0 =t_0<t_1<t_2<\cdots$ with $\lim_mt_m = \infty$ so that for all $m\in\N$ there exists $F_m\subset\{1,\ldots,N\}$ with
\begin{itemize}
\item[(a)] $\#F_m = n$ and
\item[(b)] for all $i\neq j\in F_m$ and $t_{m-1}\leq t\leq t_m$ we have $|\langle a_i(t),a_j(t)\rangle| < \varepsilon$.
\end{itemize}
This is achieved as follows. For each $r\in[0,1]$ we use Corollary \ref{combinatorial estimate zero} to find $F_r\subset\{1,\ldots,N\}$  so that for all $i\neq j\in F_r$ we have $|\langle a_i(t),a_j(t)\rangle| <\varepsilon$. Because $A$ is continuous, we may find a small open interval $I_r$ containing $r$ (half open if $r=0$) so that for all $i\neq j\in F_r$ and $t\in I_r$ we still have  $|\langle a_i(t),a_j(t)\rangle| <\varepsilon$. because $[0,1]\subset \cup_{r\in[0,1]}I_r$ and the interval $[0,1]$ is compact there must exist $r_1<\cdots<r_{m_1}$ so that $[0,1]\subset \cup_{i=1}^{m_1}I_{r_i}$. By perhaps getting rid of a few intervals we may assume that none of them is contained in the union of the others. Then, by perhaps making some of the intervals a little shorter we may assume that $\sup (I_{r_i})\leq r_{i+1}$ for $1\leq i <m_{1}-1$ and $ r_{i-1} \leq \inf(I_{r_i})$ for $1<i\leq m_1$. In other words, for $i= 1,\ldots,m_1-1$ we have $\emptyset \neq I_{r_i}\cap I_{r_{i+1}}\subset (r_i,r_{i+1})$. Define $t_0 = 0$, $t_{m_1} = 1$ and for $1\leq i<m_1$ pick $t_i\in (r_i,r_{i+1})$. If we then set $F_i = F_{r_i}$ for $1\leq i\leq m_1$ we obtain that (a) and (b) are satisfied up to $m=m_1$. For $k=2,3,\ldots$ repeat the same argument on $[k-1,k]$ to find $(t_i)_{i=m_{k-1}+1}^{m_k}$ and $(F_i)_{i=m_{k-1}+1}^{n_k}$ that satisfy (a) and (b).

The next step is to apply for each $m=1,2,\ldots$  Corollary \ref{combinatorial estimate one} to the matrix $A(t_m)$ and the sets $F_m$, $F_{m+1}$. By doing so we find a set $G_m\subset\{1,\ldots,N\}\setminus(F_m\cup F_{m+1})$ with $\#G_m = n$ so that for all $i\neq j$ with $i\in G_m$ and $j\in G_m\cup F_m\cup F_{m+1}$ we have $|\langle a_i(t_m),a_j(t_m)\rangle| < \varepsilon$. We now use the continuity of $A$ once more to find $s_m<t_m<u_m$ so that for all $t\in (s_m,u_m)$ the above hold as well.
By perhaps moving $s_m, u_m$ a bit closer to $t_m$ we have the following situation:
\begin{itemize}
\item[(c)] $0=t_0<s_1<t_1<u_1<s_2<t_2<u_2<s_3<t_3<u_3<\cdots$,
\item[(d)] for $m=1,2,\ldots$ we have $G_m\subset\{1,\ldots,N\}\setminus(F_m\cup F_{m+1})$ with $\#G_m = n$ so that for all $t\in(s_m,u_m)$, $i\neq j$ with $i\in G_m$ and $j\in G_m\cup F_m\cup F_{m-1}$ we have $|\langle a_i(t),a_j(t)\rangle| <\varepsilon$.
\end{itemize}

We are finally ready to define $L(t)$ and $R(t)$. Set $0 = u_0$. For each $m=1,2,\ldots$ take a continuous $\lambda_m:[s_m,u_m]\to[0,1]$ with $\lambda_m(s_m) = \lambda_m(u_m) = 1$ and $\lambda_m(t_m) = 0$.
\begin{itemize}
\item[(A)] For $m=0,1,\ldots $ and $t\in[u_m,s_{m+1}]$ set $R(t) = R_{(A(t),F_{m+1})}$.
\item[(B)] For $m=1,2,\ldots$ and $t\in[s_m,t_m]$ define $R(t) = R^{\lambda_m(t)}_{A(t),F_m,G_m}$. We point out that, by Remark \ref{endpoints are right}, $R(s_m) = R^{1}_{A(s_m),F_m,G_m} = R_{(A(s_m),F_m)}$ and $R(t_m) = R^0_{A(t_m),F_m,G_m} = R_{A(t_m),G_m}$.
\item[(C)] For $m=1,2,\ldots$ and $t\in[t_m,u_m]$ define $R(t) = R^{\lambda_m(t)}_{A(t),F_{m+1},G_m}$. Once more, by Remark \ref{endpoints are right}, $R(t_m) = R^{0}_{A(t_m),F_{m+1},G_m} = R_{(A(t_m),G_m)}$ and $R(u_m) = R^1_{A(u_m),F_{m+1},G_m} = R_{A(u_m),F_{m+1}}$.
\end{itemize}
By Remark \ref{still continuous}, in each case (A), (B), and (C) the function $R$ is continuous and the values at the endpoints of the corresponding intervals match. Thus $R$ defines a continuous function on $I$ and thus so does $L = (AR)^T$.

We next wish to show that for $t\geq 0$ we have $\|L(t)A(t)R(t) - I_n\| \leq 1/4$ and $\|L(t)\|\|R(t)\| \leq 4/(3\theta)$ and the proof will be complete. If $t\in [u_m,s_{m+1}]$, for some $m\in\mathbb{N}$, then this follows from Definition (A) above and \eqref{errors good zero} applied to Proposition \ref{F matrices estimates}. If $t\in[s_m,u_m]$ for some $m\in\N$ then this follows from Definition (B) or (C), property (d), and \eqref{errors good zero} applied to Proposition \ref{mix matrices estimates}.
\end{proof}

We conclude with some open questions regarding the topic of the paper.

\begin{question}
\label{probabilistic method}
As it was pointed out in Remark \ref{Bourain-Tzafriri remark}, \cite{Bourgain-Tzafriri-1987} implies a version of Theorem \ref{theorem stationary} (in which $2/\theta$ is replaced by $C/\theta$ and $C$ is a non-explicit finite constant) with an estimate $n \gtrsim \theta^2N$. This is better than our estimate $n \gtrsim \theta^{3/4}N^{1/3}$, provided that $N \gtrsim 1/\theta$.  Can the probabilistic technique from \cite{Bourgain-Tzafriri-1987} be used to obtain a similar version of the continuous Theorem \ref{main theorem} with an estimate $n \gtrsim \theta^2N$?
\end{question}

\begin{question}
For the theorem in the continuous case, we considered $A$ : $I \rightarrow M_N(\mathbb{R})$ where $I$ is an interval of $\mathbb{R}$. We conjecture that a version of Theorem \ref{main theorem} is also true for a continuous matrix function $A$ : $\mathbb{R}^d \rightarrow M_N(\mathbb{R})$. What is the relation between $d$, $N$, $\theta$, and the dimension $n$ in the conclusion of such a theorem?
\end{question}

For $1\leq p\leq\infty$ and an $N\times N$ matrix $A$ let $\|A\|_p$ denote the quantity $\max\{\|Ax\|_p:\|x\|_p\leq 1\}$. In particular, $\|A\| = \|A\|_2$.

\begin{question}
The methods used in this paper rely heavily on properties of the euclidean norm. In the statement of Theorem \ref{main theorem} we may replace condition (i) with $\|A(t)\|_p\leq 1$. It would be interesting to prove a version of this theorem as different methods might be necessary.
\end{question}

\subsection*{Acknowledgement} We would like to thank the anonymous referee for recommending the inclusion of Question \ref{probabilistic method}.

\bibliographystyle{abbrv}
\bibliography{bibliography}

\end{document}